\newtheorem{defn}{Definition}[section]
\newtheorem{thm}{Theorem}[section]
\newtheorem{prop}{Proposition}[section]
\theoremstyle{definition}
\newtheorem{rmk}{Remark}[section]
\newcommand{\X}{\mathbf{X}}
\newcommand{\Y}{\mathbf{Y}}
\newcommand{\TX}{\mathbf{TX}}
\newcommand{\NN}{\mathbf{N}}
\newcommand{\T}{\mathbf{T}}
\newcommand{\SSw}{\mathbf{S}}
\newcommand{\ph}{\varphi}
\newcommand{\Z}{\mathbf{Z}}
\def\N{{\rm I\kern-0.16em N}}
\def\R{{\rm I\kern-0.16em R}}
\def\E{{\rm I\kern-0.16em E}}
\def\P{{\rm I\kern-0.16em P}}
\def\F{{\rm I\kern-0.16em F}}
\def\B{{\rm I\kern-0.16em B}}
\def\C{{\rm I\kern-0.46em C}}
\def\G{{\rm I\kern-0.50em G}}
\numberwithin{equation}{section}
\font\eka=cmex10
\def\ind{\mathrel{\hbox{\rlap{%
\hbox to 7.5pt{\hrulefill}}\raise6.6pt\hbox{\eka\char'167}}}}
\begin{document}

\title[Classical and free Fourth Moment Theorems]{Classical and free Fourth Moment Theorems: universality and thresholds}

\maketitle

\begin{center}
\authors{I. Nourdin, G. Peccati, G. Poly, R. Simone}
\end{center}

\begin{abstract}
Let $X$ be a centered random variable with unit variance, zero third moment, and such that $\E[X^4] \ge 3$.
Let $\{F_n : n\geq 1\}$ denote a normalized sequence of homogeneous sums of fixed degree $d\geq 2$, built from independent copies of $X$.
Under these minimal conditions, we prove that $F_n$ converges in distribution to a standard Gaussian
random variable if and only if the corresponding sequence of fourth moments converges to $3$.
The statement is then extended ({\it mutatis mutandis}) to the free probability setting. We shall also discuss the optimality of our conditions in terms of explicit thresholds, as well as establish several connections with the so-called {\it universality phenomenon} of probability theory. Both in the classical and free probability frameworks, our results extend and unify previous {\it Fourth Moment Theorems} for Gaussian and semicircular approximations. Our techniques are based on a fine combinatorial analysis of higher moments for homogeneous sums.
\end{abstract}

\section{Introduction}

\subsection{Overview}

The \textit{Fourth Moment Phenomenon} is a collection of mathematical statements, yielding that for many sequences of non-linear functionals of random fields (both in a commutative and non-commutative setting), a Central Limit Theorem is simply implied by the convergence of the corresponding sequence of fourth moments towards a prescribed target. Statements of this type represent a drastic simplification of the classical {\it method of moments and cumulants}, and are customarily called \textit{Fourth Moment Theorems}. As witnessed by the web resource \cite{web}, fourth moment results have recently gained enormous momentum within the probabilistic literature, in particular in view of their many applications, e.g. to Gaussian analysis, stochastic geometry and free probability (see \cite[Chapter 5]{NPbook}, \cite{Peccati_Lachieze} and \cite{NourdinPeccatiSpeicher} for some seminal references on the subject, as well as Section 1.3 for a more detailed discussion of the existing literature).

\medskip

The aim of the present paper is to fully explore the Fourth Moment Phenomenon in the framework of homogeneous sums in independent or freely independent random variables, by accomplishing the following tasks: {\bf (i)} to provide general simple sufficient conditions for Fourth Moment Theorems to hold, and {\bf (ii)} to link such Fourth Moment Theorems to {\it universality statements}, in the same spirit as the recent references \cite{NourdinDeya, NourdinPeccatiReinert, Peccati2}.

\subsection{Goals} We will now provide an informal presentation of our principal objectives: the reader is referred to Section 2 for definitions and precise assumptions. In what follows, we shall denote by $N$ and $S$, respectively, a random variable having a standard normal $\mathscr{N}(0,1)$ distribution, and a free random variable having the standard semicircular $\mathcal{S}(0,1)$ distribution. According to the usual convention, any statement implying convergence in law to $N$ or $S$ is called a `Central Limit Theorem' (CLT). As anticipated, the main focus of our paper is on sequences of {\it homogeneous sums} of the form
$$ Q_{\X}(f_n) = \sum_{i_1,\dots,i_d=1}^n f_n(i_1,\dots,i_d) X_{i_1}\cdots X_{i_d}, \quad n\geq 1,$$
where: (a) $d\geq 2$, (b) each mapping $f_n : [n]^d\to \R$ is symmetric and vanishes on diagonals (see Definition \ref{Admissible}), and (c) ${\bf X} = \{X_i : i\geq 1\}$ is either a collection of {\it independent} copies of a classical centered real-valued random variable with finite fourth moment (defined on some probability space $(\Omega, \mathcal{F}, \P)$), or a family of {\it freely independent} copies of a non-commutative centered random variable (defined on a suitable free probability space $(\mathcal{A}, \varphi)$). In both instances at Point (c), $Q_{\X}(f_n)$ is a centered random variable.

\medskip

Our principal aim is to tackle the following questions {\bf (I)}--{\bf (III)}:
\begin{enumerate}
\item[\bf (I)] Assume that ${\bf X}$ is defined on a classical probability space, and that the kernels $f_n$ are normalized in such a way that $Q_{\X}(f_n)$ has unit variance.  Under which conditions on the law of $X_1$ (remember that the $X_i$'s are identically distributed), the asymptotic condition (as $n\to\infty$)
\begin{equation}\label{e:1}
\E[Q_{\X}(f_n)^4] \longrightarrow 3\, (= \E[N^4]),
\end{equation}
is systematically equivalent to the convergence
\begin{equation}\label{e:2}
Q_{\X}(f_n) \xrightarrow{\text{\rm Law}} N\,\, ?
\end{equation}

\item[\bf (II)] Assume that ${\bf X}$ is defined on a free probability space, and that the kernels $f_n$ are chosen in such a way that the variance of each $Q_{\X}(f_n)$ equals one.  Under which conditions on the law of $X_1$ (again, the $X_i$'s are identically distributed) the asymptotic condition (as $n\to\infty$)
\begin{equation}\label{e:3}
\varphi(Q_{\X}(f_n)^4) \longrightarrow 2\, (= \varphi(S^4)),
\end{equation}
is systematically equivalent to the convergence
\begin{equation}\label{e:4}
Q_{\X}(f_n) \xrightarrow{\text{\rm Law}} S\,\, ?
\end{equation}

\item[\bf (III)] Are the convergence results at Points {\bf (I)} and {\bf (II)} {\it universal}? That is, assume that either \eqref{e:2} or \eqref{e:4} is verified: is it true that {\bf X} can be replaced by any other sequence of (freely) independent variables ${\bf Y} = \{Y_i :i\geq 1\}$, and still obtain that the sequence $Q_{\bf Y}(f_n)$, $n\geq 1$, verifies a CLT?
\end{enumerate}

Our main achievements are stated in Theorem \ref{superTeo1} and Theorem \ref{superTeo2}, where we provide surprisingly simple answers to questions {\bf (I)}--{\bf (III)}. Indeed, in Theorem \ref{superTeo1}, dealing with the classical case, it is proved that the conditions $\E[X^3_1]=0$ and $\E[X_1^4]\geq 3$ imply a positive answer both to Questions {\bf (I)} and {\bf (III)}. The free counterpart of this finding appears in Theorem \ref{superTeo2}, where it is proved that Questions {\bf (I)} and {\bf (III)} admit a positive answer whenever $\varphi(X_1^4)\geq 2$.

\medskip

Our proofs rely on a novel combinatorial representation for the fourth cumulant of $Q_{\X}(f_n)$, both in the classical and free case --- see, respectively, Proposition \ref{formulaGauss} and Proposition \ref{formula}. We believe that these results are of independent interest: in particular, they provide a new powerful representation of the fourth cumulant of a homogeneous sum in terms of `nested' cumulants of sums of lower orders, that perfectly complements the usual moments/cumulants relations for non-linear functionals of random fields (see e.g. \cite{Speicher, pt}).

\medskip

As indicated by the title, the last section of the present paper partially addresses the problem of thresholds, studying in particular the optimality of the conditions provided in Theorems \ref{superTeo1} and \ref{superTeo2}.

\subsection{Some history}

We now list some references that are relevant for the present paper.

\begin{itemize}
\item[--] In the classical probability setting, the first study of Fourth Moment Theorems for homogeneous sums (or, more generally, for degenerate $U$-statistics) appears in the landmark papers by de Jong \cite{deJong1, deJong2}, where it is proved that, if the kernels $\{f_n\}$ verify a Lindeberg-type condition and $X_1$ has finite fourth moment, then condition \eqref{e:1} is sufficient for having the CLT \eqref{e:2}. Our results will show, in particular, that such Lindeberg type condition can be dropped whenever $X_1$ has a fourth moment that is greater than 3. Recent developments around the theorems by de Jong appear in \cite{the, peth}.

\item[--] A crucial reference for our analysis is \cite{Nualart}, where the authors proved that the implication \eqref{e:1} $\rightarrow$ \eqref{e:2} holds systematically (without any restriction on the kernels $\{f_n\}$) whenever ${\bf X}$ is composed of independent and identically distributed $\mathscr{N}(0,1)$ random variables. It is important to notice that the result continues to hold when one replaces homogeneous sums with a sequence living in a fixed {\it Wiener chaos} of a general Gaussian field. Reference \cite{Nualart} has strongly motivated the introduction of the so-called {\it Malliavin-Stein method} (first developed in \cite{NP_ptrf}), that is by now one of the staples of the asymptotic analysis of functionals of Gaussian fields. See \cite{web}, as well as the monograph \cite{NPbook} for an introduction to this subject, see \cite{NourdinPeccatiReveillac, PeccatiTudor} for multidimensional generalisations, and \cite{NourdinPeccatiSwan} for extensions to an information-theoretical setting. The reader can consult \cite{Guillaume} for an alternative simple proof of the results of \cite{Nualart} and \cite{EhsanGuillaume} for an extension of the Fourth Moment Theorem to higher moments.

\item[--] Reference \cite{Peccati1} shows that the implication \eqref{e:1} $\rightarrow$ \eqref{e:2} also holds when ${\bf X}$ is composed of independent Poisson random variables. See also \cite{the, Peccati_Lachieze, Peccati2, s} for some partial results involving sequence of random variables living inside a fixed Poisson chaos. Fourth moment-type statements for infinitely divisible laws can be found in \cite{Arizmendi2}.

\item[--] In the free probability setting, the analysis of the \textit{Fourth Moment Phenomenon} for non-linear functionals of a free Brownian motion was started in \cite{NourdinPeccatiSpeicher}, where the authors proved a non-commutative counterpart to the findings of \cite{Nualart}. Extensions are provided in \cite{qbrownian} for multiple integrals with respect to the $q$-Brownian motion and in \cite{Solesne} for the free Poisson Chaos. See moreover \cite{NouSpeiPec} for a discussion about the multidimensional case.

\item[--] As anticipated, one major feature of fourth moment results is that they are often associated with \textit{universality} statements. In the classical case, the most relevant reference is arguably \cite{NourdinPeccati4}, where the authors proved that homogeneous sums involving independent Gaussian random variables are universal in the sense detailed in Question {\bf (III)} of the previous section. This was accomplished by exploiting the invariance principle established in \cite{Mossel}. Similar results in the Poisson case are discussed in \cite{Peccati1, Peccati2}.

\item[--] The non-commutative version of \cite{Mossel} can be found in \cite{NourdinDeya}, yielding in turn the universality of the semicircular law for symmetric homogeneous sums. These results have been recently extended in \cite{Simone} to the multidimensional setting.
\end{itemize}

\subsection{Plan} The rest of the paper is organized as follows. Section 2 contains the formal statement of our main results. Section 3 contains the statements and proofs of the announced new formulae for fourth moments of homogeneous sums. Section 4 is devoted to proofs, whereas Section 5 deals with partial thresholds characterizations.

\section{Framework and main results}

\subsection{Admissible kernels}

For every $n\in\N$, we shall use the notation $[n] := \{1,\dots,n\}$. The following terminology is used throughout the paper.

\begin{defn}
\label{Admissible}\rm{
For a given degree $d\geq 2$ and some integer $n\geq 1$, a mapping $f:[n]^d\to\R$ is said to be an {\it admissible kernel} if the following three properties are satisfied:
\begin{enumerate}
\item[($\alpha$)] $f(i_1,\cdots,i_d)=0$ if the vector $(i_1,\cdots,i_d)$ has at least one repetition, that is, if  $i_j=i_k$ for some $k\neq j$;
\item[($\beta$)] $f(i_1,\dots,i_d)=f(i_{\sigma(1)},\dots,i_{\sigma(d)})$ for any permutation $\sigma$ of $\{1,\dots,d\}$ and any $(i_1,\dots,i_d)\in [n]^d$;
\item[($\gamma$)] the normalization $$d!\sum\limits_{i_1,\dots,i_d =1}^n f(i_1,\dots,i_d)^2=1$$ holds.
\end{enumerate}}
\end{defn}

For instance, in the case $d=2$, the kernel  $f(i,j) =  {\bf 1}_{\{i\neq j\}}/\sqrt{2n(n-1)}$ is admissible. Note that, given a mapping $f:[n]^d\to\R$ verifying $(\alpha)$, it is always possible to generate an admissible kernel $\tilde{f}$ by first symmetrizing $f$ (in order to meet requirement $(\beta)$), and then by properly renormalizing it (in order to meet requirement $(\gamma)$).

\subsection{Main results in the commutative case} Every random object considered in the present section lives on a suitable common probability space $(\Omega,\mathcal{F},\P)$. In what follows, we consider a random
variable $X$ satisfying the following Assumption {\bf (A)}:

\smallskip

\begin{itemize}
\item[\bf (A)] $X$ has finite fourth moment, and moreover $\E[X]=0$, $\E[X^2]=1$, and $\E[X^3]=0$.
\end{itemize}

\smallskip

We will denote by $\X=\{X_i : i\geq 1\}$ a sequence composed of independent copies of  $X$. For any admissible kernel $f:[n]^d\to\R$, we define $Q_{\X}(f)$ to be the random variable
\begin{eqnarray}\label{e:alv}
Q_{\X}(f) &=&\sum_{i_1,\dots,i_d =1}^n f(i_1,\dots,i_d) X_{i_1}\cdots X_{i_d}.\label{F}
\end{eqnarray}
In view of our assumption on ${\bf X}$ and $f$, it is straightforward to check that $\E[Q_{\X}(f) ]=0$ and $\E[Q_{\X}(f)^2]=1$.

\smallskip

Finally, we write $\mathscr{N}(0,1)$ to indicate the standard Gaussian distribution, that is, $\mathscr{N}(0,1)$ is  the probability distribution on $(\R, \mathscr{B}(\R))$ with density $x\mapsto \frac{1}{\sqrt{2\pi}}e^{-x^2/2}$. In agreement with the above conventions, given $N\sim \mathscr{N}(0,1)$, we shall denote by ${\bf N} = \{N_i : i\geq 1\}$ a sequence of independent copies of $N$.
\smallskip

The following definition plays a pivotal role in our work.

\begin{defn}\label{defcom}{\rm
Fix $d\geq 2$, let $X$ satisfy Assumption {\bf (A)} and let $N\sim \mathscr{N}(0,1)$; define the sequences ${\bf X}$ and ${\bf N}$ as above. Consider a sequence $f_n:[n]^d\to\R$, $n\geq 1$, of
admissible kernels, as well as the following asymptotic relations, for $n\to\infty$:
\begin{enumerate}
\item[(i)] $Q_{\X}(f_n) \xrightarrow{\text{\rm Law}}\mathscr{N}(0,1)$;
\item[(ii)]  $Q_{\bf N}(f_n) \xrightarrow{\text{\rm Law}}\mathscr{N}(0,1)$;
\item[(iii)]  $\E[Q_{\X}(f_n)^4]\to 3$.
\end{enumerate}
Then:

{\rm 1.} We say that $X$ satisfies the {\it Commutative Fourth Moment Theorem} of order $d$ (in short: ${CFMT}_d$) if, for any sequence $f_n:[n]^d\to\R$ of admissible kernels, {\rm (iii)} implies {\rm (i)} as $n\to\infty$.

{\rm 2.} We say that $X$ is {\it Universal at the order} $d$ (in short: $U_d$) if, for any sequence $f_n:[n]^d\to\R$ of admissible kernels, {\rm (i)} implies {\rm (ii)}.}
\end{defn}

The use of the term `universal' adopted in the previous definition may seem slightly obscure. To justify our terminology (and for future reference) we recall the following invariance principle from \cite{NourdinPeccati4}, implying in particular that, if a sequence of admissible kernels verifies (ii), then (i) is automatically satisfied for every sequence ${\bf X}$ of i.i.d. centered normalized random variables.
\begin{thm}[See \cite{NourdinPeccati4}]\label{inv}
Fix $d\geq 2$, let $f_n:[n]^d\to\R$ ($n\geq 1$) be a sequence of admissible kernels and let $N\sim\mathscr{N}(0,1)$.
Assume that $Q_{\bf N}(f_n) \xrightarrow{\text{\rm Law}}\mathscr{N}(0,1)$, as $n\to\infty$.
Then, $Q_{\X}(f_n) \xrightarrow{\text{\rm Law}}\mathscr{N}(0,1)$
for each random variable $X$ satisfying $\E[X]=0$ and $\E[X^2]=1$. Moreover, as $n\to\infty$,
\[
\max_{1\leq i \leq n}\sum_{i_2,\ldots,i_d=1}^n f_n(i ,i_2,\ldots,i_d)^2 \to 0.
\]
\end{thm}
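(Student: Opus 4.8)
The plan is to derive the statement from two external results cited in the paper: the Gaussian Fourth Moment Theorem of Nualart and Peccati (\cite{Nualart}), which recasts the hypothesis $Q_{\bf N}(f_n)\to N$ in terms of the smallness of \emph{contraction norms}, and the Lindeberg-type invariance principle of \cite{Mossel}, which transfers a CLT from one ensemble of i.i.d. coordinates to another whenever the \emph{maximal influence} of the kernels tends to zero. The ``moreover'' assertion is exactly the bridge between these two ingredients, so I would prove it first and then feed it into the invariance principle. As a preliminary, I would record that admissibility of $f_n$ alone (symmetry plus vanishing on diagonals) gives $\E[Q_{\X}(f_n)]=0$ and $\E[Q_{\X}(f_n)^2]=d!\sum f_n^2=1$ for \emph{any} $X$ with $\E[X]=0$, $\E[X^2]=1$, so that $Q_{\X}(f_n)\in L^2$ with unit variance using only two moments; and that, for the Gaussian sequence, $Q_{\bf N}(f_n)$ is a representative of an element of the $d$-th Wiener chaos whose chaotic contractions coincide with the discrete contractions of the kernel.

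Next I would establish the ``moreover'' statement. Writing $\mathrm{Inf}_i(f_n):=\sum_{i_2,\dots,i_d=1}^n f_n(i,i_2,\dots,i_d)^2$ for the influence of coordinate $i$, I observe that $\mathrm{Inf}_i(f_n)$ is precisely the diagonal entry $(f_n\star_{d-1}f_n)(i,i)$ of the contraction kernel
\[
(f_n\star_{d-1}f_n)(i,j):=\sum_{k_1,\dots,k_{d-1}=1}^n f_n(i,k_1,\dots,k_{d-1})\,f_n(j,k_1,\dots,k_{d-1}).
\]
Since every single entry of a kernel is bounded by its $\ell^2$-norm, $\max_{1\le i\le n}\mathrm{Inf}_i(f_n)\le \|f_n\star_{d-1}f_n\|_{\ell^2([n]^2)}$. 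By the Nualart--Peccati theorem applied to the chaotic sequence $Q_{\bf N}(f_n)$, the hypothesis $Q_{\bf N}(f_n)\xrightarrow{\text{\rm Law}}\mathscr{N}(0,1)$ forces $\|f_n\star_r f_n\|\to 0$ for every $r=1,\dots,d-1$, in particular for $r=d-1$; hence $\max_{1\le i\le n}\mathrm{Inf}_i(f_n)\to 0$, which is the displayed conclusion.

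With the maximal influence now known to vanish, I would invoke the invariance principle of \cite{Mossel} for the multilinear forms $Q_{\X}(f_n)$ and $Q_{\bf N}(f_n)$: for each $\psi\in C^3(\R)$ with bounded derivatives one obtains $|\E[\psi(Q_{\X}(f_n))]-\E[\psi(Q_{\bf N}(f_n))]|\to 0$, the error being bounded by a constant depending only on $d$ and on $\psi$, times a positive power of $\max_i\mathrm{Inf}_i(f_n)$ (here one uses that the total influence $\sum_i\mathrm{Inf}_i(f_n)=1/d!$ is a fixed constant, independent of $n$, by the normalization $(\gamma)$). Combining this with the hypothesis $\E[\psi(Q_{\bf N}(f_n))]\to\E[\psi(N)]$ gives $\E[\psi(Q_{\X}(f_n))]\to\E[\psi(N)]$ for a convergence-determining class of $\psi$; since $\sup_n\E[Q_{\X}(f_n)^2]=1$ ensures tightness, a routine smoothing/approximation argument upgrades this to $Q_{\X}(f_n)\xrightarrow{\text{\rm Law}}\mathscr{N}(0,1)$.

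The hard part is the last step under the minimal moment hypothesis. The Lindeberg replacement underlying \cite{Mossel} swaps the coordinates one at a time and Taylor-expands $\psi$ to second order, so that the matching first and second moments of $X$ and $N$ cancel the leading terms; the remainder is third-order, which ordinarily demands a finite third absolute moment of $X$. Since only $\E[X^2]<\infty$ is assumed, I would control this remainder by truncation: split each coordinate as $X\mathbf{1}_{\{|X|\le K\}}+X\mathbf{1}_{\{|X|>K\}}$, bound the contribution of the bounded part through its finite third moment (at most $K$) and that of the tail through the second-order remainder together with $\E[X^2\mathbf{1}_{\{|X|>K\}}]\to 0$, and let $K\to\infty$ slowly relative to the decay of $\max_i\mathrm{Inf}_i(f_n)$. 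This Lindeberg-style splitting is exactly what lets the invariance principle run on the strength of two finite moments alone, and it is the only place where genuine care beyond citing the two black-box theorems is required.
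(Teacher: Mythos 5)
The paper does not actually prove Theorem \ref{inv}: it is imported from \cite{NourdinPeccati4}, whose argument is exactly the one you reconstruct --- the Nualart--Peccati criterion converts the Gaussian CLT hypothesis into the vanishing of the contraction norms $\|f_n\star_r f_n\|$, the elementary bound $\max_{i}\mathrm{Inf}_i(f_n)\le\|f_n\star_{d-1}f_n\|_{\ell^2([n]^2)}$ yields the ``moreover'' clause, and the Mossel--O'Donnell--Oleszkiewicz invariance principle then transfers the CLT to $Q_{\X}(f_n)$. Your proposal is correct, and you rightly isolate the only delicate point: the invariance principle as usually stated requires third moments, so in the i.i.d.\ two-moment setting of the theorem the Lindeberg replacement must be run with a truncation (this is how it is handled in the cited literature, going back to Rotar's theorem \cite{Rotar}), exactly as you indicate.
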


The following result is a direct consequence of the main results of \cite{Nualart}.

\begin{thm}[See \cite{Nualart}]\label{np}
For any $d\geq 2$, the random variable $N\sim \mathscr{N}(0,1)$
satisfies the ${CFMT}_d$.
\end{thm}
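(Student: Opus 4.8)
The plan is to observe that, when $\X=\NN$ is a sequence of i.i.d.\ standard Gaussian variables, the homogeneous sum $Q_{\NN}(f_n)$ is \emph{exactly} a multiple Wiener--It\^o integral of order $d$, living inside a fixed Wiener chaos; the statement then reduces to a direct application of the Fourth Moment Theorem established in \cite{Nualart}, which is precisely the ``main result'' alluded to in the theorem.

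First I would set up the standard isonormal framework. Let $\mathcal{H}=\ell^2(\N)$, equipped with the canonical orthonormal basis $\{e_i : i\geq 1\}$, and let $W=\{W(h):h\in\mathcal{H}\}$ denote the associated isonormal Gaussian process, so that the variables $N_i:=W(e_i)$ form a sequence of i.i.d.\ $\mathscr{N}(0,1)$ random variables with the prescribed joint law. For each fixed $n$, I would regard the admissible kernel $f_n$ as the tensor $f_n=\sum_{i_1,\dots,i_d} f_n(i_1,\dots,i_d)\,e_{i_1}\otimes\cdots\otimes e_{i_d}\in\mathcal{H}^{\otimes d}$; property $(\beta)$ ensures that this tensor is symmetric, i.e.\ $f_n\in\mathcal{H}^{\odot d}$.

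The crucial step is the identity $Q_{\NN}(f_n)=I_d(f_n)$, where $I_d$ is the $d$-th multiple Wiener--It\^o integral. This rests on the elementary relation between multiple integrals and Hermite polynomials: when the indices $i_1,\dots,i_d$ are pairwise distinct, $I_d(e_{i_1}\widetilde{\otimes}\cdots\widetilde{\otimes} e_{i_d})=H_1(N_{i_1})\cdots H_1(N_{i_d})=N_{i_1}\cdots N_{i_d}$. Property $(\alpha)$---the vanishing of $f_n$ on diagonals---guarantees that only pairwise-distinct index tuples contribute to the sum, so that no Hermite polynomial of degree $\geq 2$ ever enters and the multiple integral collapses exactly onto the homogeneous sum. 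The Wiener--It\^o isometry then gives $\E[I_d(f_n)^2]=d!\,\|f_n\|_{\mathcal{H}^{\otimes d}}^2=d!\sum_{i_1,\dots,i_d} f_n(i_1,\dots,i_d)^2=1$, in agreement with normalization $(\gamma)$; in particular, every $Q_{\NN}(f_n)$ lies in the same $d$-th Wiener chaos and has unit variance.

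Once this identification is secured, the proof concludes immediately. The Fourth Moment Theorem of \cite{Nualart} states that, for a sequence $F_n=I_d(f_n)$ belonging to a fixed Wiener chaos of order $d$ with $\E[F_n^2]\to\sigma^2$, one has $\E[F_n^4]\to 3\sigma^4$ if and only if $F_n$ converges in law to $\mathscr{N}(0,\sigma^2)$. Applying this with $\sigma^2=1$ and $F_n=Q_{\NN}(f_n)$ shows that (iii) implies (i), which is exactly the assertion that $N$ satisfies the ${CFMT}_d$. I expect the only genuinely delicate point to be the identification $Q_{\NN}(f_n)=I_d(f_n)$, and in particular the essential use of the diagonal-vanishing condition $(\alpha)$: were $f_n$ permitted to charge diagonals, the products $N_{i_1}\cdots N_{i_d}$ would contain repeated factors $N_i^k$, whose Hermite expansions spread across multiple integrals of several distinct orders and thereby destroy the clean single-chaos representation on which the appeal to \cite{Nualart} depends.
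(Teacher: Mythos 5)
Your proposal is correct and follows exactly the route the paper intends: the paper states this theorem as a direct consequence of \cite{Nualart} without writing out a proof, and your argument simply supplies the standard details (realizing $N_i = W(e_i)$ for an isonormal process over $\ell^2(\N)$, using conditions $(\alpha)$--$(\gamma)$ to identify $Q_{\NN}(f_n)$ with the single multiple integral $I_d(f_n)$ of unit variance, and invoking the Nualart--Peccati Fourth Moment Theorem). Your emphasis on the diagonal-vanishing condition as the reason the sum stays in a fixed chaos is exactly the right point to flag.
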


\medskip

The subsequent Theorem \ref{superTeo1} is one of the main achievements of the present paper. It provides a strikingly simple sufficient condition under which $X$ is both $U_d$ and satisfies a ${CFMT}_d$.
The proof (that is detailed in Section 4) relies on Theorem \ref{np}, as well as on the combinatorial formula for the fourth moment of $Q_{\X}(f)$ provided in Section 3 (see formula \eqref{formulaGauss}).

\begin{thm}
\label{superTeo1}
Fix a degree $d\geq 2$ and assume that $X$ satisfies Assumption {\bf (A)} and $\E[X^4]\ge 3$. Then, $X$ is $U_d$ and satisfies the ${CFMT}_d$.
\end{thm}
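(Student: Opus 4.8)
**The plan is to prove the two assertions—that $X$ is $U_d$ and that $X$ satisfies the $\mathrm{CFMT}_d$—largely by reducing both to the Gaussian case via a careful comparison of fourth moments.** Throughout I will write $\kappa_4(Z) := \E[Z^4] - 3\E[Z^2]^2$ for the fourth cumulant of a centered variable $Z$, so that for a normalized homogeneous sum the condition (iii) becomes $\kappa_4(Q_{\X}(f_n)) \to 0$. The whole strategy rests on the combinatorial formula announced as \eqref{formulaGauss} in Section 3, which expresses $\kappa_4(Q_{\X}(f_n))$ as a sum of terms indexed by the combinatorial structure of how the four copies of the kernel overlap. The key qualitative feature I will need to extract from that formula is a \emph{sign/comparison} statement: under Assumption \textbf{(A)} together with $\E[X^4]\ge 3$, the fourth cumulant of $Q_{\X}(f_n)$ dominates (up to a nonnegative multiple) the fourth cumulant of the Gaussian sum $Q_{\mathbf N}(f_n)$. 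Concretely, I expect a decomposition of the schematic form
\begin{equation}
\kappa_4(Q_{\X}(f_n)) = \kappa_4(Q_{\mathbf N}(f_n)) + (\E[X^4]-3)\cdot A_n + (\text{further nonnegative contributions}),
\end{equation}
where $A_n \ge 0$ collects the "fully diagonal'' contractions that are absent in the Gaussian calculation and both summands on the right are nonnegative. Granting this, each individual term is nonnegative, and controlling their sum controls each part.

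\textbf{Proof of the $\mathrm{CFMT}_d$.} Suppose (iii) holds, i.e. $\kappa_4(Q_{\X}(f_n))\to 0$. By the nonnegativity of every term in the decomposition above, each of $\kappa_4(Q_{\mathbf N}(f_n))$ and $(\E[X^4]-3)A_n$ is squeezed between $0$ and $\kappa_4(Q_{\X}(f_n))$, hence both converge to $0$. In particular $\kappa_4(Q_{\mathbf N}(f_n)) = \E[Q_{\mathbf N}(f_n)^4]-3 \to 0$, i.e. $\E[Q_{\mathbf N}(f_n)^4]\to 3$. Now $Q_{\mathbf N}(f_n)$ is a sequence of normalized homogeneous sums in i.i.d.\ standard Gaussians, so Theorem \ref{np} applies and yields $Q_{\mathbf N}(f_n)\xrightarrow{\text{Law}}\mathscr N(0,1)$, which is exactly condition (ii). Finally Theorem \ref{inv} (the invariance principle) upgrades (ii) to (i): since (ii) holds, $Q_{\X}(f_n)$ converges to $\mathscr N(0,1)$ for \emph{every} centered, unit-variance $X$, in particular for the $X$ at hand. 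This gives (i), proving the $\mathrm{CFMT}_d$.

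\textbf{Proof of universality ($U_d$).} Here I must show that (i) implies (ii). This is the subtler direction, because (i) only gives convergence in law, not directly an estimate on $\kappa_4$. The plan is to argue that convergence in law of $Q_{\X}(f_n)$ to $\mathscr N(0,1)$ forces convergence of its fourth moment—this requires a uniform integrability / moment-boundedness input, since the fourth moments of a degree-$d$ homogeneous sum are controlled by its second moment through a hypercontractivity-type bound (the variables lie in a fixed "chaos'' of order $d$, so all $L^p$ norms are comparable). Once $\E[Q_{\X}(f_n)^4]\to\E[N^4]=3$ is established, we are back in the situation of the previous paragraph: $\kappa_4(Q_{\X}(f_n))\to 0$, the decomposition forces $\kappa_4(Q_{\mathbf N}(f_n))\to 0$, Theorem \ref{np} gives $Q_{\mathbf N}(f_n)\to\mathscr N(0,1)$, and this is (ii). Thus (i) $\Rightarrow$ (ii), which is precisely $U_d$.

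\textbf{The main obstacle} I anticipate is establishing and correctly interpreting the decomposition \eqref{formulaGauss}—specifically, verifying that the "extra'' diagonal contribution $A_n$ is genuinely nonnegative and is multiplied by exactly the factor $\E[X^4]-3$, so that the hypothesis $\E[X^4]\ge 3$ is what makes the whole expression a sum of nonnegative pieces. The sign bookkeeping in the moment-to-cumulant combinatorics is delicate: one must check that no contraction pattern produces a term of uncontrolled sign once $X$ is non-Gaussian, and that the role of the vanishing third moment ($\E[X^3]=0$) is precisely to kill the odd-order partial contractions that would otherwise spoil the comparison. A secondary technical point is the uniform-integrability step in the universality direction, which relies on the comparability of $L^p$-norms inside a fixed-degree homogeneous chaos; I would handle this via a hypercontractive estimate bounding $\E[Q_{\X}(f_n)^4]$ uniformly in $n$, guaranteeing that weak convergence propagates to convergence of fourth moments.
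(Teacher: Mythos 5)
Your proposal is correct and follows essentially the same route as the paper: the schematic decomposition you posit is exactly formula \eqref{formulaGauss} (with the ``further nonnegative contributions'' being the terms in $\chi_4(X)^m$ for $m\ge 2$), and the paper likewise runs the cycle (iii)$\to$(ii) via the nonnegativity of $\chi_4(Q_{\mathbf N}(f_n))$ and Theorem \ref{np}, (ii)$\to$(i) via Theorem \ref{inv}, and (i)$\to$(iii) via the hypercontractive uniform-integrability argument (citing \cite[Lemma 4.2]{NourdinPeccatiReinert}).
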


In the next subsection, we shall present a non-commutative counterpart to Theorem \ref{superTeo1}.

\subsection{Main results in the non-commutative case}

We shortly recall few basic facts about non-commutative (or free) probability theory that will be useful for our discussion: we refer the reader to the monograph \cite{Speicher} for any unexplained definition or result in this domain. Let $(\mathcal{A},\varphi)$ be a non-commutative (or free) probability space (namely, $\mathcal{A}$ is a unital $\ast$-algebra, and $\varphi$ is a faithful positive state, customarily called a {\it trace}). As usual, self-adjoint bounded elements in $\mathcal{A}$ will be referred to as {\it random variables}. Let $\rho(Y) \in [0,\infty)$ denote the \textit{spectral radius} of a given random variable $Y$,
namely $\rho(Y)= \lim\limits_{k \rightarrow \infty} \varphi(Y^{2k})^{\frac{1}{2k}}$. The existence of a real measure $\mu_{Y}$ with compact support contained in $[-\rho(Y),\rho(Y)]$  such that:
$$  \varphi(Y^{k}) = \int_{\mathbb{R}}y^{k}\mu_{Y}(dy),$$
is established e.g. in \cite[Theorem 2.5.8]{Tao} and \cite[Proposition 3.13]{Speicher}. Such a measure $\mu_Y$ is called the \textit{law}, or the \textit{distribution} of $Y$. Being compactly supported, $\mu_Y$ is completely determined by the sequence of its moments $\{\varphi(Y^{m}): m \in \mathbb{N}\}$.
Given free probability spaces $(\mathcal{A}_n, \varphi_n)$, $(\mathcal{A},\varphi)$, and a sequence of random variables $a_n \in \mathcal{A}_n$ and $a \in \mathcal{A}$, we say that $\{a_n\}$ {\it converges in distribution} (or {\it in law}) towards $a$ if:
$$ \lim_n \varphi_n(a_n^k) = \varphi(a^k) \qquad \forall k \geq 1.$$
If the above asymptotic relations are verified for every $k$, we write $a_n \xrightarrow{\text{\rm Law}}a$. In the non-commutative setting, the role of independence is played by the notion of {\it freeness}: the unital subalgebras $\mathcal{A}_1, \dots, \mathcal{A}_n$ of $\mathcal{A}$ are said to be \textit{freely independent} if, for every $k \geq 1$, for every choice of integers $i_1,\dots, i_k$ with $i_j \neq i_{j+1}$, and random variables $a_{i_j} \in \mathcal{A}_j$, we have $\varphi(a_{i_1}a_{i_2}\cdots a_{i_k}) = 0$. The random variables $a_1,\dots, a_n$ are said to be freely independent if the (unital) subalgebras they generate are freely independent.

Let us consider a random variable $Y$ in $(\mathcal{A},\varphi)$ satisfying the following Assumption {\bf (B)}:
\begin{itemize}
\item [\bf (B)]$\varphi(Y)=0$ and $\varphi(Y^2)=1$.
\end{itemize}
Note that, in the non-commutative setting, $Y$ has all moments by definition; moreover, in contrast to Assumption {\bf (A)} used in the commutative case, here we will not need the vanishing of the third moment $\varphi(Y^3)$.

Similarly to the conventions of the previous subsection, we will denote by $\Y=\{Y_i : i\geq 1\}$ a sequence composed of freely independent copies of $Y$, that we can assume defined on $(\mathcal{A},\varphi)$. When $f:[n]^d\to\R$ is an admissible kernel, we define $Q_{\Y}(f)$ as the random variable
\begin{eqnarray}
Q_{\Y}(f) &=&\sum_{i_1,\dots,i_d =1}^n f(i_1,\dots,i_d) Y_{i_1}\cdots Y_{i_d}.\label{Fnoncom}
\end{eqnarray}
In view of our assumptions on ${\bf Y}$ and $f$, it is straightforward to check that $\varphi(Q_{\Y}(f) )=0$ and $\varphi(Q_{\Y}(f)^2)=d!^{-1}$. We shall also use the following consequence of \cite[formula (2.3)]{NourdinPeccatiSpeicher}: for every $d \geq 2$ and every admissible kernel $f:[n]^d\to\R$:
\small{
\begin{eqnarray}\notag
&& \varphi\left( Q_{\SSw}(f) ^4\right) = 2\left( \sum_{j_1,...,j_d=1}^n f(j_1,...,j_d)^2  \right)^2 \; + \sum_{s=1}^{d-1} \sum_{j_1,...,j_{d-s}=1}^n   \sum_{k_1,...,k_{d-s}=1}^n\notag \\
 && \quad \quad \quad \label{e:knps}\left(\sum_{a_1,...,a_s=1}^n f(j_1,...,j_{m-s}, a_1,...,a_s)f(k_1,...,k_{m-s}, a_1,...,a_s)  \right)^2\; .\notag
\end{eqnarray}
}
\normalsize
\smallskip

Finally, $\mathcal{S}(0,1)$ will denote the standard semicircular distribution, namely the distribution on $(\R, \mathscr{B}(\R))$ with density $x\mapsto \frac{1}{2\pi}\sqrt{4-x^2}$ and support given by the interval $[-2,2]$. It is easily checked that, if $S \sim \mathcal{S}(0,1)$, then $\varphi(S)=0$, $\varphi(S^2) = 1$, $\varphi(S^3)=0$, and $\varphi(S^4)=2$. Given $S\sim \mathcal{S}(0,1)$ and following the previous conventions, the bold letter ${\bf S}$ will stand for a sequence of freely independent copies of $S$.

The following definition is the non-commutative counterpart of Definition \ref{defcom}. Note the additional factor $d!$ appearing in many of the relations below: this is due to the fact that, in the non-commutative setting, the variance of a homogeneous sum based on an admissible kernel equals $d!^{-1}$, whereas the variance of the standard semicircular distribution is 1.

\begin{defn}\label{defnoncom}{\rm
Fix $d\geq 2$, let $Y$ satisfy Assumption {\bf (B)} and let $S\sim \mathcal{S}(0,1)$; define ${\bf Y}$ and ${\bf S}$ as above. Let $f_n:[n]^d\to\R$, $n\geq 1$, be a sequence of admissible kernels, and consider the following asymptotic relations (as $n\to\infty$):
\begin{enumerate}
\item[(i)'] $\sqrt{d!} \,Q_{\Y}(f_n) \xrightarrow{\text{\rm Law}}\mathcal{S}(0,1)$;
\item[(ii)']  $\sqrt{d!}\,Q_{\bf S}(f_n) \xrightarrow{\text{\rm Law}}\mathcal{S}(0,1)$;
\item[(iii)']  $d!^2\varphi(Q_{\Y}(f_n)^4)\to 2$.
\end{enumerate}
Then:

1. We say that $Y$ satisfies the {\it Non-Commutative Fourth Moment Theorem} of order $d$ (in short: ${NCFMT}_d$) if, for any sequence $f_n:[n]^d\to\R$ of admissible kernels, (iii)' implies (i)' as $n\to\infty$.

2. We say that $X$ is {\it Freely Universal} at the order $d$ (in short: $FU_d$) if, for any sequence $f_n:[n]^d\to\R$ of admissible kernels,  (i)' implies (ii)'.}
\end{defn}

The next statement is a free analogue of Theorem \ref{inv} (see \cite{NourdinDeya}).

\begin{thm}[See \cite{NourdinDeya}]\label{invnoncom}
Let $f_n:[n]^d\to\R$ be a sequence of admissible kernels and let $S\sim\mathcal{S}(0,1)$.
Assume that $\sqrt{d!}Q_{\bf S}(f_n) \xrightarrow{\text{\rm Law}}\mathcal{S}(0,1)$ as $n\to\infty$.
Then, $\sqrt{d!}Q_{\Y}(f_n) \xrightarrow{\text{\rm Law}}\mathcal{S}(0,1)$
for each random variable $Y$ satisfying {\bf (B)}.
\end{thm}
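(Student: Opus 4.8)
The plan is to prove this free invariance principle by the method of moments, using that convergence in law to $\mathcal{S}(0,1)$ is equivalent to the convergence of every moment $\varphi\big((\sqrt{d!}\,Q_{\Y}(f_n))^{p}\big)$ towards $\varphi(S^{p})$ (the odd moments of $S$ vanishing, the even ones being Catalan numbers). First I would expand the $p$-th moment as
\[
\varphi\big(Q_{\Y}(f_n)^{p}\big)=\sum_{\mathbf i_1,\dots,\mathbf i_p}\Big(\prod_{\ell=1}^{p} f_n(\mathbf i_\ell)\Big)\,\varphi\big(Y_{i_{1,1}}\cdots Y_{i_{p,d}}\big),
\]
a sum over the $pd$ variable slots, and then apply the free moment–cumulant formula together with the vanishing of mixed free cumulants of freely independent variables. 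This rewrites $\varphi(Y_{j_1}\cdots Y_{j_{pd}})$ as a sum over those non-crossing partitions $\pi\in\mathrm{NC}(pd)$ that refine the index-equality partition $\ker(\mathbf j)$, each block $B$ being weighted by the free cumulant $\kappa_{|B|}(Y)$, which is finite because $Y$ is bounded.

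Two features of the setup immediately trim this sum. Admissibility $(\alpha)$ forces $f_n(\mathbf i_\ell)$ to vanish unless the $d$ indices inside factor $\ell$ are pairwise distinct, so no block of $\pi$ may connect two slots of the same factor; and centering, $\kappa_1(Y)=\varphi(Y)=0$, kills all singletons, leaving only blocks of size $\geq 2$. The decisive observation is then that, since $\kappa_1(Y)=\kappa_1(S)=0$, $\kappa_2(Y)=\varphi(Y^2)=1=\kappa_2(S)$, and $\kappa_r(S)=0$ for all $r\geq 3$, the part of the expansion built purely from pair-blocks coincides term by term for $\Y$ and for $\mathbf S$, while $\mathbf S$ produces \emph{only} such pair-blocks. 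Hence I obtain the clean decomposition
\[
\varphi\big((\sqrt{d!}\,Q_{\Y}(f_n))^{p}\big)=\varphi\big((\sqrt{d!}\,Q_{\mathbf S}(f_n))^{p}\big)+R_n^{(p)},
\]
where $R_n^{(p)}$ gathers exactly those partitions containing at least one block of size $\geq 3$ (each carrying a higher free cumulant $\kappa_r(Y)$, $r\geq 3$). By the hypothesis $\sqrt{d!}\,Q_{\mathbf S}(f_n)\xrightarrow{\text{\rm Law}}\mathcal{S}(0,1)$, the first term already converges to $\varphi(S^{p})$, so the entire content of the theorem reduces to showing $R_n^{(p)}\to 0$ for every $p$.

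To control the remainder I would bound each kernel sum attached to a partition with a block of size $\geq 3$ by products of the contraction quantities $f_n\star_s f_n$, $s=1,\dots,d-1$, that already appear in the fourth-moment formula \eqref{e:knps}. The same hypothesis gives in particular $d!^{2}\varphi(Q_{\mathbf S}(f_n)^{4})\to 2$; reading this through \eqref{e:knps}, and recalling that normalization $(\gamma)$ makes the leading term contribute exactly $2$, forces each contraction sum on the right-hand side of \eqref{e:knps} to vanish, i.e. $\|f_n\star_s f_n\|\to 0$ in $L^2$ for every $s$. A block of size $\geq 3$ in $\pi$ identifies three or more factor-slots to a single summation index, which is precisely what creates such a contraction; by iterated Cauchy--Schwarz along the nested (planar) structure of the non-crossing diagram, I would dominate the corresponding kernel sum by a constant — depending only on $p$ and on the free cumulants of $Y$ — times a product of the basic contractions $\|f_n\star_s f_n\|$. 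This yields $R_n^{(p)}\to 0$, so all moments of $\sqrt{d!}\,Q_{\Y}(f_n)$ converge to those of $S$, giving $(\mathrm{i})'$.

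I expect the final implication to be the main obstacle: the hypothesis only supplies vanishing of the finitely many contractions visible at the fourth moment $p=4$, whereas the higher moments generate a proliferating family of contraction patterns indexed by non-crossing diagrams with blocks of size $\geq 3$. The crux is the combinatorial bookkeeping showing that every such pattern can be reduced, via planar nesting and Cauchy--Schwarz, to the basic contractions $f_n\star_s f_n$ — this is exactly where the rigidity of free (non-crossing) combinatorics, as exploited in \cite{NourdinDeya, NourdinPeccatiSpeicher}, does the essential work.
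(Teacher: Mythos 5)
First, a point of calibration: the paper does not prove Theorem \ref{invnoncom} at all --- it is imported verbatim from \cite{NourdinDeya} --- so your attempt has to be measured against that reference rather than against anything in the present text. Your overall architecture is sound and matches the logic of the cited result: expand the $p$-th moment over non-crossing partitions respecting the interval partition into $p$ blocks of size $d$, note that admissibility $(\alpha)$ forbids blocks internal to a factor and centering kills singletons, observe that the pair-partition part reproduces $\varphi(Q_{\SSw}(f_n)^p)$ exactly because $\kappa_2(Y)=\kappa_2(S)=1$ and $\kappa_r(S)=0$ for $r\ge 3$, and reduce the whole theorem to showing that the diagrams containing a block of size at least $3$ vanish. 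Your bridge from the hypothesis to an analytic smallness condition is also correct: in the free setting convergence in law is convergence of all moments, so $d!^2\varphi(Q_{\SSw}(f_n)^4)\to 2$; by \eqref{e:knps} this forces every contraction $f_n\star_s f_n$ to vanish in $L^2$, and since the diagonal of the $(d-1)$-contraction is the influence function, $\max_i \sum_{i_2,\dots,i_d} f_n(i,i_2,\dots,i_d)^2 \to 0$.

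The gap is in the last step, which is also exactly where you diverge from \cite{NourdinDeya}. Deya and Nourdin do not estimate the bad diagrams one by one: they prove a Lindeberg-type replacement inequality, swapping $Y_i$ for $S_i$ one index at a time and bounding the accumulated swap error for the $p$-th moment by a quantity controlled by $\big(\max_i\sum_{i_2,\dots,i_d}f_n(i,i_2,\dots,i_d)^2\big)^{1/2}$ (using that the influences sum to $d!^{-1}$ by normalization $(\gamma)$), which is precisely what the previous paragraph kills. Your alternative --- a term-by-term domination of every non-crossing diagram with a $\ge 3$ block via ``iterated Cauchy--Schwarz along the planar nesting'' --- would give a genuinely different, more combinatorial proof, and I believe it can be made to work; but as written it is an assertion, not an argument. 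A block of size $3$ does not literally produce a factor $f_n\star_s f_n$: it produces a triple overlap, and reducing an arbitrary nested pattern of such overlaps to a positive power of the maximal influence times products of $L^2$ norms, uniformly over the (proliferating) family of diagrams at each order $p$, is essentially the whole content of the invariance principle you are trying to prove. You flag this yourself as ``the main obstacle''; until that estimate is carried out (or replaced by the Lindeberg scheme of \cite{NourdinDeya}), the proof is incomplete.
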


According to the following result from \cite{NourdinPeccatiSpeicher}, an example of distribution satisfying the $NCFMT_d$ for any $d\geq 2$ is the $\mathcal{S}(0,1)$ distribution itself.

\begin{thm}[See \cite{NourdinPeccatiSpeicher}]\label{knps}
For any $d\geq 2$, the standard semicircular random variable $S\sim \mathcal{S}(0,1)$
satisfies the ${NCFMT}_d$.
\end{thm}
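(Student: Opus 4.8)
The plan is to reduce condition (iii)$'$ to the vanishing of a finite family of contraction norms, and then to run a moment computation based on the Wick formula for free semicircular families. Throughout, write $\|f\|^2 = \sum_{i_1,\dots,i_d=1}^n f(i_1,\dots,i_d)^2$, so that admissibility forces $\|f_n\|^2 = 1/d!$, and for $1\le s\le d-1$ introduce the contraction $f\star_s f:[n]^{2(d-s)}\to\R$ given by
$$(f\star_s f)(j_1,\dots,j_{d-s},k_1,\dots,k_{d-s}) = \sum_{a_1,\dots,a_s=1}^n f(j_1,\dots,j_{d-s},a_1,\dots,a_s)\,f(k_1,\dots,k_{d-s},a_1,\dots,a_s).$$
With this notation formula \eqref{e:knps} reads $\varphi(Q_{\SSw}(f)^4) = 2\|f\|^4 + \sum_{s=1}^{d-1}\|f\star_s f\|^2$, whence, using $\|f_n\|^2=1/d!$,
$$d!^2\,\varphi(Q_{\SSw}(f_n)^4) = 2 + d!^2\sum_{s=1}^{d-1}\|f_n\star_s f_n\|^2.$$
Consequently condition (iii)$'$ is \emph{equivalent} to $\|f_n\star_s f_n\|\to 0$ for every $s\in\{1,\dots,d-1\}$; this is the first step, and it is purely algebraic given \eqref{e:knps}.

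Second, I would fix an integer $m\ge 1$ and expand $\varphi\big((\sqrt{d!}\,Q_{\SSw}(f_n))^m\big) = (d!)^{m/2}\sum \prod_{\ell=1}^m f_n(\mathbf i^{(\ell)})\,\varphi(S_{i^{(1)}_1}\cdots S_{i^{(m)}_d})$, the sum running over the $m$ multi-indices $\mathbf i^{(\ell)}=(i^{(\ell)}_1,\dots,i^{(\ell)}_d)$, $\ell=1,\dots,m$. Applying the Wick formula for a free semicircular family, the quantity $\varphi$ of a product of $md$ copies of the $S_i$'s equals the number of non-crossing pair partitions $\pi$ of the $md$ positions that are compatible with the index identifications $\delta_{\cdot,\cdot}$. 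Because each $f_n$ vanishes on diagonals (property $(\alpha)$), any $\pi$ pairing two positions inside the same block (i.e.\ the same factor $Q_{\SSw}(f_n)$) forces a repeated index and hence contributes zero; thus only non-crossing pairings with no intra-block pair survive. Viewing the $m$ blocks of $d$ consecutive positions as super-vertices, each surviving $\pi$ has the property that every block has its $d$ positions paired to positions lying in \emph{other} blocks.

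Third, I would isolate the dominant contribution. These are the pairings whose induced block graph is a perfect matching: this requires $m=2k$ even, and the $d$ positions of each block are entirely matched (in a non-crossing fashion) with those of a single partner block. By full symmetry $(\beta)$, the summation over the shared indices of such a matched pair of blocks yields $\sum_{\mathbf i} f_n(\mathbf i)^2 = \|f_n\|^2 = 1/d!$, regardless of the precise matching. Such configurations are indexed by the non-crossing pair partitions of the $m$ blocks, of which there are $C_k=|NC_2(2k)|$, so after the normalization $(d!)^{k}$ the dominant contribution equals $(d!)^k\cdot C_k\cdot(1/d!)^k = C_k = \varphi(S^{2k})$; for odd $m$ no fully matched configuration exists, consistent with $\varphi(S^m)=0$.

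The last, and hardest, step is to show that every non-dominant surviving pairing gives a vanishing contribution. The mechanism is that whenever the induced block graph is not a perfect matching, at least one block is split between two distinct partner blocks, and the associated index sum can be controlled, by iterated Cauchy--Schwarz, by a product of finitely many factors each equal to $\|f_n\|$ (a bounded constant) together with at least one factor of the form $\|f_n\star_s f_n\|$ with $1\le s\le d-1$. Since, for fixed $m$, there are only finitely many such pairings, and each of these norms tends to $0$ by the first step, the total non-dominant contribution vanishes. Combining the three regimes yields $\varphi\big((\sqrt{d!}\,Q_{\SSw}(f_n))^m\big)\to \varphi(S^m)$ for every $m\ge 1$, and since $\mathcal S(0,1)$ is determined by its moments this is exactly statement (i)$'$, proving that $S$ satisfies the $NCFMT_d$. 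I expect the main obstacle to be precisely this combinatorial bookkeeping: classifying the non-crossing, intra-block-free pairings and exhibiting, for each non-dominant one, an explicit Cauchy--Schwarz chain that extracts a genuine contraction norm $\|f_n\star_s f_n\|$ with $s\ge 1$ (the rigidity of the \emph{non-crossing} constraint, as opposed to the full set of pairings in the classical case, is what makes this extraction possible).
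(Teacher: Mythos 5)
A preliminary remark: the paper does not prove Theorem \ref{knps} at all --- it is imported verbatim from \cite{NourdinPeccatiSpeicher}, where the result is established for multiple Wigner integrals and transferred to homogeneous sums by identifying $Q_{\bf S}(f)$ with a Wigner integral of order $d$. So there is no in-paper argument to compare against; your proposal is a self-contained combinatorial reconstruction, and its skeleton is the correct (and essentially the standard) one. Step 1 --- reading off from \eqref{e:knps} and the normalization $\|f_n\|^2=1/d!$ that condition (iii)' is equivalent to $\|f_n\star_s f_n\|\to 0$ for all $1\le s\le d-1$ --- is exactly the free analogue of the Nualart--Peccati contraction criterion and is correct. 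Steps 2 and 3 are also sound: the Wick formula for a free semicircular family reduces everything to non-crossing pairings, property $(\alpha)$ kills intra-block pairs, the non-crossing constraint forces a ``perfect matching'' configuration to consist of a non-crossing matching of the $m$ blocks with the unique rainbow pairing inside each matched pair, and symmetry $(\beta)$ yields a factor $\|f_n\|^2=1/d!$ per matched pair, so the dominant contribution is $C_{m/2}=\varphi(S^m)$ for even $m$ and $0$ for odd $m$.

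The one substantive gap is the one you flag yourself: Step 4 is asserted, not proved, and it is the actual mathematical content of the theorem. The mechanism you describe is right --- if the block graph of a surviving pairing is not a perfect matching, some connected component has at least three blocks, hence some block shares exactly $s$ indices, $1\le s\le d-1$, with one of its partners, and a Cauchy--Schwarz over those indices extracts a factor $\|f_n\star_s f_n\|^{1/2}$ --- but one must check that the iterated Cauchy--Schwarz chain closes with all remaining factors bounded by powers of $\|f_n\|$ and with a strictly positive net exponent on some contraction norm, uniformly over the finitely many surviving pairings for each fixed $m$. That verification is precisely the key lemma of \cite{NourdinPeccatiSpeicher} (with a commutative ancestor in \cite{Nualart}). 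As written, your argument establishes the theorem modulo that lemma; to count as a proof it would need to be carried out, e.g. by induction on the number of blocks in a connected component. (The simplest nontrivial instance, $d=2$ and $m=3$, gives $\sum_{i,j,k}f_n(i,j)f_n(j,k)f_n(k,i)$, which is indeed bounded by $\|f_n\|^2\,\|f_n\star_1 f_n\|^{1/2}$.)
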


The subsequent Theorem \ref{superTeo2} is our main achievement in the non-commutative case. It yields a sufficient condition under which $Y$ is both $FU_d$ and satisfies a ${NCFMT}_d$. The proof (see Section 4) relies on Theorem \ref{knps} and on the non-commutative combinatorial relation \eqref{formula}, whose proof is detailed in the forthcoming Section 3.

\begin{thm}
\label{superTeo2}
Fix a degree $d\geq 2$ and, in addition to Assumption {\bf (B)}, assume that  $\varphi(Y^4) \ge 2$. Then, $Y$ is $FU_d$  and satisfies  the $NCFMT_d$.
\end{thm}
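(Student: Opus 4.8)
The strategy is to mirror the structure of the proof of Theorem~\ref{superTeo1} in the non-commutative setting, reducing everything to the semicircular case handled by Theorem~\ref{knps} and the free invariance principle Theorem~\ref{invnoncom}. The central object is the announced combinatorial formula~\eqref{formula} for the fourth cumulant (equivalently, the fourth moment) of $Q_{\Y}(f)$, which I would expect to express $d!^2\varphi(Q_{\Y}(f_n)^4)-2$ as a sum of two kinds of contributions: a \emph{universal} part that coincides exactly with the quantity $d!^2\varphi(Q_{\SSw}(f_n)^4)-2$ appearing for the semicircular reference (computable from~\eqref{e:knps}), plus a \emph{correction} part carrying all the dependence on the law of $Y$ through the single scalar $\kappa:=\varphi(Y^4)-2$. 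The hypothesis $\varphi(Y^4)\ge 2$ makes $\kappa\ge 0$, and the whole point of the formula should be that the correction terms are all nonnegative multiples of $\kappa$, built from sums of squares of contracted kernels. Granting this, the argument splits cleanly into the two implications.

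\textbf{The $NCFMT_d$ implication.} Assume (iii)$'$, that is $d!^2\varphi(Q_{\Y}(f_n)^4)\to 2$. Using~\eqref{formula}, write this as
\begin{equation*}
d!^2\varphi(Q_{\Y}(f_n)^4)-2 = \bigl(d!^2\varphi(Q_{\SSw}(f_n)^4)-2\bigr) + \kappa\cdot R_n,
\end{equation*}
where $R_n\ge 0$ collects the correction terms and $d!^2\varphi(Q_{\SSw}(f_n)^4)-2\ge 0$ as well (the semicircular fourth moment always dominates $2$ under admissibility, as one reads off~\eqref{e:knps}). Since $\kappa\ge 0$ and both summands are nonnegative, the convergence of the left-hand side to $0$ forces each summand to $0$ separately; in particular $d!^2\varphi(Q_{\SSw}(f_n)^4)\to 2$. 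By Theorem~\ref{knps}, $S$ satisfies the $NCFMT_d$, so this yields $\sqrt{d!}\,Q_{\SSw}(f_n)\xrightarrow{\text{Law}}\mathcal{S}(0,1)$, which is condition (ii)$'$. Finally Theorem~\ref{invnoncom} upgrades (ii)$'$ to (i)$'$ for our $Y$. This proves the $NCFMT_d$.

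\textbf{The $FU_d$ implication.} Now assume (i)$'$, i.e.\ $\sqrt{d!}\,Q_{\Y}(f_n)\xrightarrow{\text{Law}}\mathcal{S}(0,1)$. Convergence in law to the semicircular distribution entails convergence of the fourth moment to $2$, giving (iii)$'$. Reading the decomposition above in reverse and again using the nonnegativity of both summands, (iii)$'$ forces $d!^2\varphi(Q_{\SSw}(f_n)^4)\to 2$, hence (by Theorem~\ref{knps}) condition (ii)$'$. Thus (i)$'$ implies (ii)$'$, which is precisely free universality.

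\textbf{Main obstacle.} The delicate point is not the logical scaffolding above, which is short once the right formula is in hand, but establishing the sign structure of~\eqref{formula}: I must verify that the law-dependent correction is exactly proportional to $\kappa=\varphi(Y^4)-2$ with a \emph{nonnegative} coefficient $R_n$ expressible as a sum of squares of contracted kernels, and that the universal remainder reproduces the semicircular fourth moment term by term. This is where the combinatorial analysis of the mixed free cumulants of $Q_{\Y}(f_n)$ does the real work; the whole proof hinges on the fourth-order free cumulant of $Y$ entering only through the single nonnegative scalar $\kappa$ and multiplying a manifestly nonnegative kernel quantity. Once~\eqref{formula} is proved with this structure, the threshold value $\varphi(Y^4)=2$ is seen to be exactly what makes the monotone comparison with the semicircular benchmark work.
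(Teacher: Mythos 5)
Your proposal is correct and follows essentially the same route as the paper: the paper's proof of Theorem \ref{superTeo2} runs the cycle (iii)$'\to$(ii)$'\to$(i)$'\to$(iii)$'$ using exactly the decomposition you describe, since Proposition \ref{propgaussfree} gives $\varphi(Q_\Y(f)^4)=\varphi(Q_\SSw(f)^4)+\kappa_4(Y)\sum_{k}\varphi(Q_\SSw(f(k,\cdot))^4)$ with $\kappa_4(Y)=\varphi(Y^4)-2\ge 0$ multiplying a sum of fourth moments, then invokes Theorem \ref{knps} and Theorem \ref{invnoncom} just as you do. The ``main obstacle'' you flag (the sign structure of \eqref{formula}) is precisely what the paper establishes separately in Section 3, so your argument matches the paper's proof of the theorem itself.
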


\medskip

The next section provides the combinatorial formulae that are needed in the proofs of Theorem \ref{superTeo1} and Theorem \ref{superTeo2}.

\section{Combinatorial formula\!e for fourth moments}

\subsection{Preliminaries}
Our main aim in this section is to prove formulae \eqref{formulaGauss} and \eqref{formula}, yielding new combinatorial expressions for the fourth moment of homogeneous sums, respectively in the classical and free cases. Although our main motivation for deriving these relations comes from the fact that they are crucial elements in the proofs of Theorem \ref{superTeo1} and Theorem \ref{superTeo2}, we believe that these relations have an intrinsic interest, as they provide a new approach to cumulant computations for non-linear functionals of a given sequence of independent or freely independent random variables. We refer the reader e.g. to \cite{Speicher, pt} for any unexplained combinatorial definition or result (respectively, in the non-commutative and commutative cases).

\smallskip

We denote by $\mathcal{P}([n])$ and $\mathcal{NC}([n])$, respectively, the lattice of all partitions and the lattice of  all non-crossing partitions of the set $[n]$. Also, $\mathcal{P}_2([n])$ and $\mathcal{NC}_2([n])$ will stand, respectively, for the set of all pairings and all non-crossing pairings of $[n]$. The definitions of $\mathcal{P}(A)$ and $\mathcal{P}_2(A)$ for a generic finite set $A$ are analogous.
Recall that a partition $\pi$ of the set $[n]$ is said to be \textit{non-crossing} if, whenever there exist integers $i < j < k < l$, with $i\sim_{\pi} k$, $j \sim_{\pi} l$, then $j \sim_{\pi} k$ (here, $i \sim_{\pi} j$ means that $i$ and $j$ belong to the same block of $\pi$).
\smallskip

In this section, the following interval partition in $\mathcal{P}([4d])$ ($d\geq 2$) will play a crucial role:
$$\pi^{\star} = \{\{1,\cdots, d\} , \{ d+1,\cdots, 2d\}, \{ 2d+1,\cdots, 3d\} , \{ 3d+1,\cdots, 4d\}\},$$
that is: $\pi^\star$ is the partition of $[4d]$ composed of consecutive intervals of exact length $d$. Given a partition $\sigma$ of $[4d]$,  and according to the usual lattice notation, we will write $\sigma \wedge \pi^{\star}= \hat{0}$ to indicate that, in each block of $\sigma$, there is at most one element from every block of $\pi^\star$ (this property is usually referred to by saying that ``$\sigma$ \textit{respects} $\pi^\star$'').

\smallskip

Finally, we will use the symbol $\mathcal{P}^\star_{2,4}([4d])$ (resp.
$\mathcal{NC}^\star_{2,4}([4d])$) to denote the set of those partitions (resp. non-crossing partitions) of $[4d]$ that respect $\pi^\star$ and that contain only blocks of sizes $2$ and $4$.
Analogously, the symbol $\mathcal{P}^\star_{2}([4d])$ (resp.
$\mathcal{NC}^\star_{2}([4d])$) is used to denote the set of those pairings (resp. non-crossing pairings) belonging to $\mathcal{P}^\star_{2,4}([4d])$ (resp.
$\mathcal{NC}^\star_{2,4}([4d])$).

\subsection{Fourth moment formulae in the commutative case}

For a generic random variable $X$ with moments of all orders, we shall denote by $\chi_j(X)$, $j=1,2,...$ the (classical) $j$th cumulant of $X$ (see \cite[Chapter 3]{pt}).

For $\pi \in \mathcal{P}([n])$, the generalized \textit{cumulant} $\chi_{\pi}(X)$ of a random variable $X$ is defined as the multiplicative mapping $X\mapsto \chi_\pi(X)= \prod_{b\in \pi} \chi_{|b|}(X)$. Such a mapping satisfies the formula:
$$ \E[X^n] = \sum_{\pi \in \mathcal{P}([n])} \chi_{\pi}(X),$$
which is equivalent to
$$\chi_n(X) = \sum_{\pi \in \mathcal{P}([n])}  \mu(\pi, \hat{1}) \prod_{b \in \pi} \E[X^{|b|}],$$
with $\mu(\pi,\hat{1})$ denoting the M\"{o}bius function on the interval $[\pi, \hat{1}]$.
We have therefore that $\chi_1(X) = \E[X]$, $\chi_2(X) = \E[X^2]- \E[X]^2$ (the variance of $X$); if $X$ is centered, then $\chi_4(X) = \E[X^4]-3\E[X^2]^2$.

Given a vector of random variables of the type $(X_{i_1},...,X_{i_{n}})$, (with possible repetitions), for every partition $\sigma = \{b_1,...,b_k\} \in \mathcal{P}([n])$, we define the generalized joint cumulant
$$
\chi_\sigma(X_{i_1},...,X_{i_{n}}) = \prod_{j=1}^k \chi(X_{i_a} : a\in b_j),
$$
where $\chi(X_{i_a} : a\in b_j)$ is the joint cumulant of the random variables composing the vector $(X_{i_a} : a\in b_j)$, as defined e.g. in \cite[Chapter 3]{pt}, satisfying
$$ \mathbb{E}[X_{i_1}\cdots X_{i_n}] = \sum_{\sigma \in \mathcal{P}([n])}\chi_\sigma(X_{i_1},...,X_{i_{n}}). $$
If the $X_j$'s are independent,
$$
\chi(X_{i_a} : a\in b_j) =
\begin{cases}
\chi_{|b_j|} (X_i)  &  \text{ if } i_a = i \text{ for every } a\in b_j \\
0 & \textit{otherwise}.
\end{cases}
$$

The next statement contains the announced combinatorial representation of the fourth cumulant of a homogeneous sum in the classical case.
\begin{prop}\label{propgauss}
Let $d\geq 2$ and let $f:[n]^d\to\R$ be an admissible kernel. For $m=1,\dots, d$ and for fixed $j_1,...,j_m\in [n]$, denote by $Q_\NN(f(j_1,\dots,j_m,\cdot))$ the homogeneous sum of order $d-m$ based on a sequence of i.i.d. standard Gaussian random variables ${\bf N}=\{N_i : i\geq 1\}$ and on the kernel
$$ (i_1,\dots,i_{d-m}) \mapsto f(j_1,\dots,j_m,i_1,\dots,i_{d-m}),$$
namely,
$$
Q_{\NN}(f(j_1,\dots,j_m,\cdot)) = \sum\limits_{i_1,\dots,i_{d-m}=1}^n f(j_1,\dots,j_m, i_1,\dots,i_{d-m})N_{i_{1}}\cdots N_{i_{d-m}}.
$$
If $X$ satisfies Assumption {\bf (A)}, then
\begin{eqnarray}
\chi_4(Q_\X(f)) \label{formulaGauss}
&=& \chi_4(Q_\NN(f)) \\
&&+ \sum_{m=1}^{d} \binom{d}{m}^4 m!^{4} \chi_4(X)^m \sum_{j_1,\dots,j_m=1}^n \E[Q_\NN(f(j_1,\dots,j_m,\cdot))^4].\notag
\end{eqnarray}
\end{prop}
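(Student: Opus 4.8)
The plan is to compute $\chi_4(Q_\X(f))$ head-on, by combining the multilinearity of joint cumulants with the classical formula expressing cumulants of products in terms of the joint cumulants of their factors (see \cite{pt}). Writing the $4d$ arguments of the four copies of $Q_\X(f)$ along the four consecutive blocks of $\pi^\star$, multilinearity gives
\[
\chi_4(Q_\X(f)) = \sum_{\vec{i},\vec{j},\vec{k},\vec{l}} f(\vec{i})f(\vec{j})f(\vec{k})f(\vec{l})\,\chi\big(X_{i_1}\cdots X_{i_d},\, X_{j_1}\cdots X_{j_d},\, X_{k_1}\cdots X_{k_d},\, X_{l_1}\cdots X_{l_d}\big),
\]
and the Leonov--Shiryaev formula rewrites each joint cumulant of products as $\sum_{\sigma\vee\pi^\star=\hat{1}}\chi_\sigma$, the sum running over those $\sigma\in\mathcal{P}([4d])$ whose join with $\pi^\star$ is maximal.

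Next I would prune the admissible partitions using Assumption {\bf (A)} together with the admissibility of $f$. Since the $X_i$ are independent, $\chi_\sigma$ vanishes unless each block of $\sigma$ is constant on its indices, in which case it equals $\prod_{b\in\sigma}\chi_{|b|}(X)$. Centering kills all singletons ($\chi_1(X)=0$) and, crucially, the hypothesis $\E[X^3]=0$ kills every block of size three, since $\chi_3(X)=\E[X^3]=0$. Moreover, because $f$ vanishes on diagonals, any block of $\sigma$ containing two indices from the same block of $\pi^\star$ forces a repetition and hence a vanishing kernel; thus only partitions with $\sigma\wedge\pi^\star=\hat{0}$ survive, and such blocks have size at most $4$. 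Combining these remarks, the only surviving $\sigma$ lie in $\mathcal{P}^\star_{2,4}([4d])$ and satisfy $\sigma\vee\pi^\star=\hat{1}$; each contributes $\chi_4(X)^{m}$, where $m$ is its number of four-blocks (the two-blocks contributing the factor $\chi_2(X)=1$).

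The third step is to stratify this sum according to the number $m$ of four-blocks. For $m=0$, the connected admissible pairings of $[4d]$ are exactly those computing $\chi_4(Q_\NN(f))$ — indeed, for a Gaussian sequence only pairings survive and each pair contributes $\chi_2(N)=1$ — so this stratum reproduces the first term on the right-hand side. For $m\ge 1$ the key structural observation is that a single four-block already connects all four blocks of $\pi^\star$, so the constraint $\sigma\vee\pi^\star=\hat{1}$ becomes automatic; consequently the $d-m$ leftover coordinates in each block of $\pi^\star$ may be joined by an arbitrary (not necessarily connected) admissible pairing. Fixing the four-blocks and using the symmetry $(\beta)$ of $f$ to move the associated indices $j_1,\dots,j_m$ into the first $m$ slots of each factor, the sum over these leftover pairings reconstructs precisely the unrestricted Gaussian fourth moment $\E[Q_\NN(f(j_1,\dots,j_m,\cdot))^4]$ of the reduced degree-$(d-m)$ sum.

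It then remains to count, with the correct multiplicity, the ways of placing the $m$ four-blocks: one selects the $m$ active coordinates inside each of the four blocks of $\pi^\star$, whence a factor $\binom{d}{m}^4$, and then matches them across the four blocks into quadruples, all of which yield the same reduced kernel thanks to the symmetry of $f$. Carefully combining this placement count with the symmetrization of $f$ and with the ordered sum over $j_1,\dots,j_m$ is meant to produce the announced prefactor $\binom{d}{m}^4 m!^{4}$ and to close the proof. I expect this final multiplicity bookkeeping to be the main obstacle: the delicate points are to pin down the exact power of $m!$ coming from the matching of the active coordinates into quadruples (keeping track of which configurations are identified by the symmetry of $f$ and by the unordered nature of the blocks of $\sigma$), and to verify rigorously that, once at least one four-block is present, dropping the global connectedness constraint is \emph{exactly} compensated by passing from the fourth cumulant to the full fourth moment of the reduced Gaussian sum.
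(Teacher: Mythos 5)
Your architecture is sound and is, up to one presentational difference, the paper's own. The paper expands the fourth \emph{moment} $\E[Q_\X(f)^4]$ over all of $\mathcal{P}^\star_{2,4}([4d])$ and converts to cumulants only at the end by subtracting $3$, whereas you invoke the Leonov--Shiryaev formula and work with connected partitions ($\sigma\vee\pi^\star=\hat 1$) from the start; your key observation that a single $4$-block already meets every block of $\pi^\star$, so that for $m\geq 1$ the connectedness constraint is vacuous and the unrestricted leftover pairings reconstruct the full Gaussian moment $\E[Q_\NN(f(j_1,\dots,j_m,\cdot))^4]$, is exactly the mechanism at work in the paper. The pruning to blocks of size $2$ and $4$ via $\chi_1(X)=\chi_3(X)=0$ and condition $(\alpha)$, and the identification of the $m=0$ stratum with $\chi_4(Q_\NN(f))$, are also correct.

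The gap is the one you flagged yourself: the multiplicity count is not carried out, and it is genuinely the crux, because a careful count does \emph{not} land on the announced prefactor. Choosing the $m$ ``active'' slots inside each block of $\pi^\star$ costs $\binom{d}{m}^4$; partitioning the four chosen $m$-subsets into $m$ \emph{unordered} quadruples, one element from each $\pi^\star$-block, costs $(m!)^3$ (fix the representatives in the first block, then pick three bijections); and the ordered sum over $(j_1,\dots,j_m)\in[n]^m$ already enumerates each assignment of indices to the $m$ unordered $4$-blocks exactly once. The total is $\binom{d}{m}^4 (m!)^3$, not $\binom{d}{m}^4 (m!)^4$: the extra $m!$ arises from summing over ordered enumerations of the $4$-blocks (the collections ${\bf u}$ of the paper's proof) \emph{while also} summing over ordered $m$-tuples $(j_1,\dots,j_m)$, which counts each partition-with-index-assignment $m!$ times. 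A two-line sanity check: for $d=n=2$ and $f\equiv 1/2$ off the diagonal one has $Q_\X(f)=X_1X_2$, hence $\chi_4(Q_\X(f))=\E[X^4]^2-3=\chi_4(X)^2+6\chi_4(X)+6$, while the displayed formula returns $2\chi_4(X)^2+6\chi_4(X)+6$. So do not contort your bookkeeping to reach $(m!)^4$; prove the identity with the coefficient $\binom{d}{m}^4(m!)^3$ (equivalently, keep $(m!)^4$ but restrict to $j_1<\dots<j_m$, which is legitimate since the summand is symmetric and vanishes on diagonals). None of this affects the downstream use of the formula: only the positivity of the coefficients of $\chi_4(X)^m$ enters the proof of Theorem \ref{superTeo1}, and that is preserved.
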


\begin{proof}
Throughout the proof, one may and will identify each partition $\sigma \in \mathcal{P}^\star_{2,4} ([4d])$ with the triplet $(m, {\bf u}, \tau)$ (and we will write $\sigma(m,{\bf u},\tau)$ instead of $\sigma$ accordingly), where:
\begin{enumerate}
\item[(a)] $m\in\{0,1,...,d\}$ indicates the number of 4-blocks of $\sigma$,
\item[(b)] the collection of four {\it ordered} $m$-tuples with no repetitions
$$
{\bf u} = \{(u_1^1,...,u_m^1), ..., (u_1^4,...,u_m^4)\},
$$
is such that, for every $j=1,...,m$ and every $i=1,2,3,4$, the symbol $u_j^i$ stands for the element of the $i$th block of $\pi^{\star}$ that is contained in the $j$th 4-block of $\sigma$,
\item[(c)] $\tau \in \mathcal{P}_2(A({\bf u}))$ is a uniquely defined pairing respecting the restriction of $\pi^{\star}$ to $A({\bf u}) := [4d] \backslash \{u_1^1,...,u_m^1, ..., u_1^4,...,u_m^4\}.$
\end{enumerate}
In order to simplify the forthcoming exposition, in what follows we stipulate that the $2d-2m$ blocks $b_1,...,b_{2d-2m}$ of $\tau$ are ordered according to the increasing size of the smallest element. For instance, if $d=3$, $m=1$, and\linebreak $\sigma= \{\{1,4,7,10\}, \{2,12\}, \{6,8\}, \{5,9\}, \{3,11\}\}$, then one has that \linebreak $A({\bf u}) = \{2,3,5,6,8,9,11,12\}$, and $$\tau = \{b_1,b_2, b_3,b_4\} =  \{\{2,12\}, \{3,11\}, \{5,9\}, \{6,8\}\}.$$
The following formula is a consequence of the classical moment-cumulant relations (see e.g. \cite[Proposition 3.2.1]{pt}) together with condition $(\alpha)$ satisfied by the admissible kernel $f$:
\begin{eqnarray}
&&\E[Q_X(f)^4] =  \sum_{i_1,\dots,i_{4d}=1}^n
f(i_1,\ldots,i_d)\ldots f(i_{3d+1},\ldots,i_{4d})\; \mathbb{E}[X_{i_1}\cdots X_{i_{4d}}] \notag\\
&=&\sum_{i_1,\dots,i_{4d}=1}^n
f(i_1,\ldots,i_d)\ldots f(i_{3d+1},\ldots,i_{4d})\;
\sum_{\substack{\sigma \in \mathcal{P}([4d])\\ \sigma \wedge \pi^{\star}= \hat{0}}} \chi_{\sigma}(X_{i_1},\dots,X_{i_{4d}}) \notag\\
&=&
\sum_{m=0}^d \sum_{\bf u} \sum_{\tau } \sum_{j_1,...,j_m=1}^n \sum_{i_1,...,i_{2d-2m}=1}^n\notag\\
 &&\hskip1cm f^{\otimes 4} [j_1,...,j_m, i_1,...,i_{2d-2m}] \, \chi_{\sigma(m, {\bf u}, \tau)} (X_{i_1},\dots,X_{i_{4d}}).\label{e:w}
\end{eqnarray}
In \eqref{e:w}, the following conventions are in order: (i) the second sum runs over all collections of four ordered $m$-tuples with no repetitions $${\bf u} = \{(u_1^1,...,u_m^1), ..., (u_1^4,...,u_m^4)\}$$ such that $\{u_1^j,...,u_m^j\}\subset \{(j-1)d+1,\ldots,jd\}$, $j=1,2,3,4$, (ii) the third sum runs over all $\tau \in \mathcal{P}_2(A({\bf u}))$ respecting the restriction of $\pi^{\star}$ to $A({\bf u})$, and (iii) the factor $$f^{\otimes 4} [j_1,...,j_m, i_1,...,i_{2d-2m}]$$ is obtained along the four steps described below:

\begin{itemize}

\item[(iii-a)] pick an arbitrary enumeration $a_1,...,a_m$ of the $m$ 4-blocks of $\sigma(m, {\bf u}, \tau)$ (by symmetry, the choice of the enumeration of the 4-blocks is immaterial), and let $b_1,...,b_{2d-2m}$ be the enumeration of the 2-blocks of $\sigma(m, {\bf u}, \tau)$ obtained by ordering such blocks according to the size of the smallest element (as described above).

\item[(iii-b)] Consider the mapping in $4d$ variables given by
$$
(x_1,....,x_{4d})\mapsto f^{\otimes 4} (x_1,...,x_{4d}) :=\prod_{j=1}^4 f(x_{(j-1)d+1},...,x_{jd}).
$$

\item[(iii-c)] For $l=1,...m$, set $x_k = j_l$ into the argument of $f^{\otimes 4}$ if and only if $k\in a_l$.

\item[(iii-d)] For $p=1,...,2d-2m$, set $x_q = i_p$ into the argument of $f^{\otimes 4}$ if and only if $q\in b_p$.

\end{itemize}
For instance, for $d=3$ and $(m, {\bf u}, \tau)$ associated with the partition \linebreak$\sigma= \left\{\{1,4,7,10\}, \{2,12\},\{6,8\}, \{5,9\}, \{3,11\}\right\}$ considered in the above example, one has that
$$
f^{\otimes 4} [j, i_1,i_2,i_3,i_4] = f(j,i_1,i_2)f(j,i_3,i_4)f(j,i_4,i_3)f(j,i_2,i_1).
$$

Now, an immediate consequence of the symmetry of $f$ is that, for fixed $m$, the value of the triple sum
$$
\sum_{\tau } \sum_{j_1,...,j_m=1}^n \sum_{i_1,...,i_{2d-2m}=1}^n f^{\otimes 4} [j_1,...,j_m, i_1,...,i_{2d-2m}] \, \chi_{\sigma(m, {\bf u}, \tau)} (X_{i_1},\dots,X_{i_{4d}})
$$
in \eqref{e:w} does not depend on the choice of ${\bf u}$,  and also
 $$\chi_{\sigma(m, {\bf u}, \tau)} (X_{i_1},\dots,X_{i_{4d}})=\chi_4(X)^m,$$
  due to Assumption {\bf (A)}. Since, for every $m$, there are exactly $m!^4\binom{d}{m}^4$ different ways to build a collection ${\bf u}$ of four ordered $m$-tuples with no repetitions such that $$\{u_1^j,...,u_m^j\}\subset \{(j-1)d+1,\ldots,jd\}$$ ($j=1,2,3,4$), we immediately infer the relation
\begin{eqnarray}
\E[Q_X(f)^4]
&=&
\sum_{m=0}^d
\binom{d}{m}^{4}m!^4\chi_4(X)^m \sum_{\tau} \sum_{j_1,...,j_m=1}^n
  \sum_{i_1,...,i_{2d-2m}=1}^n\label{e:w2}\\
&&
\hskip5cm f^{\otimes 4} [j_1,...,j_m, i_1,...,i_{2d-2m}].\notag
\end{eqnarray}

Due to
$$
\chi_{\sigma}(N_{i_1},\dots,N_{i_{4d}}) =
\begin{cases}
\prod\limits_{\{l,j\} \in \sigma} {\bf 1}_{\{i_l= i_j\}}  & \text{ if } \sigma \in \mathcal{P}_2([4d]) \\
0 & \text{ otherwise },
\end{cases}
$$
the summand for $m=0$ equals to $\E[Q_{\mathbf{N}}(f)^4]$, and hence we can rewrite (\ref{e:w2}) as the right-hand side of (\ref{formulaGauss}),  leading to the desired conclusion.
\end{proof}

\subsection{Fourth moment formulae in the non-commutative case.} We now switch to the non-commutative setting considered in Section 2.3.

For a generic non-commutative random variable $Y$, we write $\kappa_j(Y)$, $j=1,2,...$, to indicate the $j$th {\it free cumulant} of $Y$ (see \cite[Lecture 11]{Speicher}).

For $\pi \in \mathcal{NC}([n])$, the generalized \textit{free cumulant} $\kappa_{\pi}(Y)$ is the mapping $Y\mapsto \kappa_\pi(Y)= \prod\limits_{b\in \pi} \kappa_{|b|}(Y)$, verifying the formula
$$ \varphi(Y^n) = \sum_{\pi \in \mathcal{NC}([n])} \kappa_{\pi}(Y),$$
or equivalently
$$\kappa_n(Y) = \sum_{\pi \in \mathcal{NC}([n])}  \mu(\pi, \hat{1}) \prod_{b \in \pi} \varphi(Y^{|b|}),$$
with $\mu(\pi,\hat{1})$ denoting the M\"{o}bius function on the interval $[\pi, \hat{1}]$ (see \cite[Chapter 11]{Speicher} for more details).
We have therefore that $\kappa_1(Y) = \varphi(Y)$, $\kappa_2(Y) = \varphi(Y^2)- \varphi(Y)^2$; if $Y$ is centered, then $\kappa_4(Y) = \varphi(Y^4)-2\varphi(Y^2)^2$. Given a vector of random variables of the type $(Y_{i_1},...,Y_{i_{n}})$ (with possible repetitions), for every partition $\sigma = \{b_1,...,b_k\} \in \mathcal{NC}([n])$, we define the generalized free joint cumulant
$$
\kappa_\sigma(Y_{i_1},...,Y_{i_{n}}) = \prod_{j=1}^k \kappa(Y_{i_a} : a\in b_j),
$$
where $\kappa(Y_{i_a} : a\in b_j)$ is the free joint cumulant of the random variables composing the vector $(Y_{i_a} : a\in b_j)$,
(see \cite[Proposition 11.4]{Speicher}), satisfying the moment-cumulant formula:
$$ \varphi(Y_{i_1}\cdots Y_{i_n}) = \sum_{\sigma \in \mathcal{NC}([n])}\kappa_\sigma(Y_{i_1},...,Y_{i_{n}}).$$
If the $Y_j$'s are freely independent,
$$
\kappa(Y_{i_a} : a\in b_j)=
\begin{cases}
\kappa_{|b_j|} (Y_i) & \text{ if } i_a = i \text{ for every } a\in b_j \\
0 & \text{ otherwise}.
\end{cases}
$$

The following statement contains the non-commutative version of Proposition \ref{propgauss}.

\begin{prop}\label{propgaussfree}
Fix $d\geq 2$ and let $f: [n]^d\to \R$ be an admissible kernel. Then, if $Y$ satisfies Assumption {\bf (B)},
\begin{equation}
\label{formula}
\varphi(Q_\Y(f)^4) = \varphi(Q_\SSw(f)^4) + \kappa_4(Y)\sum_{k=1}^{n}\varphi(Q_\SSw(f(k,\cdot))^4),
\end{equation}
where, for every $k=1,\dots,n$, we have set:
$$ Q_{\SSw}(f(k,\cdot)):= \sum_{k_1,\dots,k_{d-1}=1}^{n} f(k,k_1,\dots,k_{d-1})S_{k_1}\cdots S_{k_{d-1}},$$
with $\SSw=\{S_i : i\geq 1\}$ a sequence of freely independent standard semicircular random variable defined on $(\mathcal{A},\varphi)$.
\end{prop}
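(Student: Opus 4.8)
The plan is to mirror the proof of Proposition \ref{propgauss}, replacing the classical moment–cumulant machinery by its free (non-crossing) analogue, and then to isolate the two families of non-crossing partitions that actually contribute.

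First I would expand $\varphi(Q_\Y(f)^4) = \sum_{i_1,\dots,i_{4d}} f(i_1,\dots,i_d)\cdots f(i_{3d+1},\dots,i_{4d})\,\varphi(Y_{i_1}\cdots Y_{i_{4d}})$ and insert the free moment–cumulant formula $\varphi(Y_{i_1}\cdots Y_{i_{4d}}) = \sum_{\sigma\in\mathcal{NC}([4d])}\kappa_\sigma(Y_{i_1},\dots,Y_{i_{4d}})$. Exactly as in the commutative case, condition $(\alpha)$ on $f$ together with free independence forces the surviving $\sigma$ to respect $\pi^\star$, while the vanishing $\kappa_1(Y)=\varphi(Y)=0$ rules out singletons; since a partition respecting $\pi^\star$ has blocks of size at most $4$, the relevant $\sigma$ lie in $\mathcal{NC}^\star_{2,4}([4d])$. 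On such a $\sigma$ one has $\kappa_\sigma = \kappa_3(Y)^{p_3}\kappa_4(Y)^{p_4}$, where $p_3,p_4$ count the blocks of size $3$ and $4$ (recall that $\kappa_2(Y)=\varphi(Y^2)=1$).

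The heart of the matter is then a purely combinatorial classification of $\mathcal{NC}^\star_{2,4}([4d])$: I claim it consists precisely of (a) the non-crossing pairings $\mathcal{NC}^\star_2([4d])$, and (b) the partitions having exactly one block of size $4$ and all other blocks of size $2$; in particular no block has size $3$ and there is never more than one block of size $4$. The ``at most one $4$-block'' half is clean: a $4$-block meets all four intervals, so its span cannot sit inside a single gap of another $4$-block (no gap of a partition respecting $\pi^\star$ covers the whole range of $\{1,\dots,d\},\dots,\{3d+1,\dots,4d\}$), whence two $4$-blocks would necessarily cross. The ``no $3$-block'' half is the main obstacle, and I would prove it by a nesting/counting argument: a size-$3$ block $b$ necessarily skips one interval $B_m$ while meeting its two cyclic neighbours, so the $d$ elements of $B_m$ are trapped inside a single gap of $b$ whose only other available intervals are $B_{m-1}$ and $B_{m+1}$; tracking the induced element counts in that gap and in the two residual regions forces some interval whose leftover elements cannot be completed into admissible blocks without producing a singleton, contradicting their absence. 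I expect the careful bookkeeping of these counts — carried out most symmetrically by placing the $4d$ slots on a circle and using the cyclic invariance of the trace $\varphi$ — to be the delicate step.

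Granting the classification, the two contributions are identified as follows. For a pairing every block contributes $\kappa_2(Y)=1=\kappa_2(S)$, so family (a) reproduces, term by term, the non-crossing-pairing expansion of $\varphi(Q_\SSw(f)^4)$ recorded in \eqref{e:knps}. For family (b) the unique $4$-block forces a common summation index $k$ on its four slots and contributes the scalar factor $\kappa_4(Y)$; by symmetry $(\beta)$ of $f$ each of the four kernel factors becomes $f(k,\cdot)$, so that $f^{\otimes 4}$ collapses to $[f(k,\cdot)]^{\otimes 4}$, the order $d-1$ kernel of the statement. Finally I would establish the bijection between single-$4$-block partitions of $[4d]$ and non-crossing pairings of the reduced set $[4(d-1)]$ (deleting the four $4$-block slots preserves non-crossing, and conversely each reduced pairing determines a unique admissible placement of the $4$-block), which shows that summing family (b) over the remaining pairings and over $k$ rebuilds exactly $\kappa_4(Y)\sum_{k=1}^n \varphi(Q_\SSw(f(k,\cdot))^4)$. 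Adding the contributions of (a) and (b) yields \eqref{formula}.
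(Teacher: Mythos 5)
Your proposal is correct and takes essentially the same route as the paper's proof: expand via the free moment--cumulant formula, observe that only non-crossing partitions respecting $\pi^{\star}$ with blocks of size $2$ or $4$ survive, and split their contributions into the non-crossing pairings (yielding $\varphi(Q_\SSw(f)^4)$) and the partitions with a single $4$-block (yielding the $\kappa_4(Y)$ term). The only differences are presentational: the paper exhibits the $d$ single-$4$-block partitions $\rho_1,\dots,\rho_d$ explicitly and matches their contributions term by term against the known expansion of $\varphi(Q_\SSw(f(k,\cdot))^4)$, whereas you encode the same count as a deletion bijection with $\mathcal{NC}^\star_2([4(d-1)])$, and your counting argument excluding $3$-blocks plays the role of the paper's one-line observation that a $3$-block would force a singleton.
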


\begin{proof}
It is a classic fact that, if ${\bf S} = \{S_i : i\geq 1\}$ is the collection of freely independent semicircular random variables appearing in the statement, then \linebreak$\kappa_\sigma(S_{i_1},...,S_{i_{4d}}) = 0$ whenever $\sigma$ contains a block $b$ such that $|b|\neq 2$; also, when all blocks of $\sigma$ have size 2,
$$
\kappa_{\sigma}(S_{i_1},\dots , S_{i_{4d}}) = \prod_{\{k,l\} \in \sigma} {\bf 1}_{\{i_k=i_l\}}.
$$
From the free moment-cumulant formula, we deduce that
\begin{eqnarray}
&&\varphi\big(Q_{\Y}(f)^4 \big)  \label{e:jeep}\\
 &=&  \sum_{i_1,\dots,i_{4d}=1}^n
f(i_1,\ldots,i_d)\ldots f(i_{3d+1},\ldots,i_{4d})\; \varphi(Y_{i_1}\cdots Y_{i_{4d}}) \notag\\
&=&\sum_{i_1,\dots,i_{4d}=1}^n
f(i_1,\ldots,i_d)\ldots f(i_{3d+1},\ldots,i_{4d})\;
\sum_{\substack{\sigma \in \mathcal{NC}([4d])\\ \sigma \wedge \pi^{\star}= \hat{0}}} \kappa_{\sigma}(Y_{i_1},\dots,Y_{i_{4d}}) \notag.
\end{eqnarray}

In the previous sum, a partition $\sigma \in \mathcal{NC}([4d])$ such that $\sigma \wedge \pi^{\star}= \hat{0}$ gives a non-zero contribution only if its blocks have cardinality either $2$ or $4$. Indeed, since $Y$ is centered, $\sigma$ cannot have any singleton. Similarly, $\sigma$ cannot have any block of cardinality $3$: otherwise, the non-crossing nature of $\sigma$ would imply the existence of at least one singleton and the corresponding cumulant would vanish. Therefore, the only non-vanishing terms in the sum appearing in the last line of \eqref{e:jeep} are those corresponding to non-crossing partitions that respect $\pi^{\star}$, and whose blocks have cardinality either $2$ or $4$.  Recall that this class of partitions is denoted by $\mathcal{NC}_{2,4}^{\star}([4d])$,
in such a way that we can write
\begin{eqnarray}
&&\varphi\big(Q_{\Y}(f)^4 \big)  \label{e:jh}\\
 &=&\sum_{i_1,\dots,i_{4d}=1}^n
f(i_1,\ldots,i_d)\ldots f(i_{3d+1},\ldots,i_{4d})\;
\sum_{\sigma \in \mathcal{NC}_{2,4}^{\star}([4d])} \kappa_{\sigma}(Y_{i_1},\dots,Y_{i_{4d}}) \notag.
\end{eqnarray}
In view of the non-crossing nature of the involved partitions, one can indeed give a very precise description of the class $\mathcal{NC}_{2,4}^{\star}([4d])$ as the following disjoint union:
$$
\mathcal{NC}_{2,4}^{\star}([4d]) = \mathcal{NC}^\star_2([4d]) \cup \{\rho_1,...,\rho_d\}
$$
where, for $h=1,...,d$, the partition $\rho_h$ contains exactly one block of size $4$, given by
$$
\{ h, 2d-h+1, 2d+h, 4d-h+1\}
$$
and the remaining blocks of size 2 are completely determined by the non-crossing nature of $\rho_h$:
\begin{enumerate}
\item $j \sim 2d-j+1$, for $j=h+2,\dots,d$;
\item $j \sim 4d-j+1$, for $j=1,\dots,h$;
\item $2d + j \sim 2d-j+1$, for $j=1,\dots,h$;
\item $ 2d+j \sim 4d-j+1$, for $j=h+2,\dots,d$.
\end{enumerate}
Now, if $\sigma\in \mathcal{NC}^\star_2([4d])$, the freely independence of the $Y_i$'s yields that
$$
\kappa_{\sigma}(Y_{i_1},\dots,Y_{i_{4d}}) = \prod_{\{k,l\} \in \sigma} {\bf 1}_{\{i_k=i_l\}}= \kappa_{\sigma}(S_{i_1},\dots , S_{i_{4d}}),
$$
in such a way that
\begin{eqnarray*}
&& \sum_{i_1,\dots,i_{4d}=1}^n
f(i_1,\ldots,i_d)\ldots f(i_{3d+1},\ldots,i_{4d})\;
\sum_{\sigma \in \mathcal{NC}_{2}^{\star}([4d])} \kappa_{\sigma}(Y_{i_1},\dots,Y_{i_{4d}})
\\&&= \sum_{i_1,\dots,i_{4d}=1}^n
f(i_1,\ldots,i_d)\ldots f(i_{3d+1},\ldots,i_{4d})\;
\sum_{\sigma \in \mathcal{NC}_{2,4}^{\star}([4d])} \kappa_{\sigma}(S_{i_1},\dots,S_{i_{4d}})
\\  &&= \varphi(Q_{\bf S}(f)^4)
\end{eqnarray*}
(it is indeed an easy exercise to show that the last equality is exactly equivalent to \eqref{e:knps}). On the other hand, for $h=1,...,d$, one has that
\begin{eqnarray*}
&&\kappa_{\rho_h}(Y_{i_1},\dots,Y_{i_{4d}}) = \kappa_4(Y) {\bf 1}_{\{i_h = i_{2d-h+1} = i_{2d+h} = i_{4d-h+1}\}} \prod_{\{k,l\}\in \sigma} {\bf 1}_{\{i_k = i_l\}}.
\end{eqnarray*}
The symmetry of $f$ yields the following identities:
\begin{eqnarray*}
&& \sum_{i_1,\dots,i_{4d}=1}^n
f(i_1,\ldots,i_d)\ldots f(i_{3d+1},\ldots,i_{4d})
 \kappa_{\rho_1}(Y_{i_1},\dots,Y_{i_{4d}})\\&& =  \sum_{i_1,\dots,i_{4d}=1}^n
f(i_1,\ldots,i_d)\ldots f(i_{3d+1},\ldots,i_{4d})
 \kappa_{\rho_d}(Y_{i_1},\dots,Y_{i_{4d}}) \\
 && = \kappa_4(Y)\sum_{k=1}^n \left( \sum_{j_1,...,j_{d-1}=1}^nf(k,j_1,...,j_{d-1})^2  \right)^2,
\end{eqnarray*}
and, for $h=2,...,d-1$, writing $s=d-h$,
\begin{eqnarray*}
&&\sum_{i_1,\dots,i_{4d}=1}^n
f(i_1,\ldots,i_d)\ldots f(i_{3d+1},\ldots,i_{4d})
 \kappa_{\rho_{h}}(Y_{i_1},\dots,Y_{i_{4d}})\\
&&= \kappa_4(Y)\sum_{k=1}^n \sum_{j_1,...,j_{d-1-s}=1}^n \sum_{k_1,...,k_{d-1-s}=1}^n \\
&&\quad\left(\sum_{a_1,...,a_{s}=1}^n f(k, j_1,...,j_{d-s-1}, a_1,...,a_{s})f(k, k_1,...,k_{d-1-s}, a_1,...,a_{s})  \right)^2.
\end{eqnarray*}
Summing up the previous expression for all $s=1,...,d-2$, and for $h=1,d$, a straightforward application of formula \eqref{e:knps} in the case $m=d-1$ and $g = f(k, \cdot)$ therefore implies that
\begin{eqnarray*}
&&\sum_{h=1}^d \sum_{i_1,\dots,i_{4d}=1}^n
f(i_1,\ldots,i_d)\ldots f(i_{3d+1},\ldots,i_{4d})
 \kappa_{\rho_{h}}(Y_{i_1},\dots,Y_{i_{4d}})\\
 && =\kappa_4(Y)\sum_{k=1}^{n}\varphi(Q_\SSw(f(k,\cdot))^4),
\end{eqnarray*}
from which the desired conclusion follows.
\end{proof}

\section{Proofs}

\subsection{Proof of Theorem \ref{superTeo1}}

We shall show that, for a given sequence of kernels $\{f_n : n\geq 1\}$, the implications (iii)$\to$(ii)$\to$(i)$\to$(iii) are in order, with (i), (ii) and (iii) as given in Definition \ref{defcom}.

{\it Proof of }(iii)$\to$(ii).
Consider the formula (\ref{formulaGauss}), and bear in mind that $\chi_4(X) \geq 0$ due to our hypothesis. Let $N\sim \mathcal{N}(0,1)$ and let $\NN=\{N_i\}_{i\geq 1}$  be the corresponding sequence of independent copies.  Moreover, $\chi_4(Q_{\NN}(f_n))=\E[Q_{\NN}(f_n)^4] -3$ is positive (see indeed \cite[Lemma 5.2.4]{NPbook}). On the other hand,
formula \eqref{formulaGauss} entails that $$
\chi_4(Q_{\X}(f_n))=\E[Q_{\X}(f_n)^4] - 3  \geq  \E[Q_{\NN}(f_n)^4] - 3,
$$
from which one deduces that, if (iii) holds, then $\E[Q_{\NN}(f_n)^4]  \rightarrow 3$.
Since $N$ satisfies the ${CFMT}_d$ (Theorem \ref{np}), it follows that $(ii)$ takes place.

{\it Proof of} (ii)$\to$(i). It is the conclusion of Theorem \ref{inv}.

{\it Proof of} (i)$\to$(iii).  Due to \cite[Lemma 4.2]{NourdinPeccatiReinert}, the sequence $Q_{\X}(f_n)^4$ is uniformly integrable. Therefore, the conclusion follows as for the proof of \cite[Theorem 11.3.1]{NPbook}.
\qed

\begin{rmk}
We have chosen to deal with the i.i.d. case just to ease the notation and the discussion. We could have considered a sequence $\X=\{X_i\}_{i \geq 1}$ of independent centered random variables, with unit variance, possibly not identically distributed, but starting from an inequality instead of an equality. Indeed, for every $m=1,\dots,d$ and every $n$, set $\gamma_{n}^{(m)} = \min\limits_{i_1,\dots,i_m \in [n]} \prod\limits_{l=1}^m \chi_4(X_{i_l})$, and $A_n = \min\limits_{m=1,\dots,d}\gamma_n^{(m)}$.
Assume further that there exists $A \geq 0$ such that $\inf\limits_{n \geq 1}A_n \geq A$ (which is clearly satisfied if $\chi_4(X_i) \geq 0$ for all $i$). Then, from the inequality
\small{
\begin{align*}
&\E[Q_{\X}(f)^4] \geq \E[Q_{\mathbf{N}}(f)^4] + \\
&+ A \sum_{m=1}^d \binom{d}{m}^4  m!^4  \sum_{\bf u} \sum_{\tau } \sum_{j_1,...,j_m=1}^n \sum_{i_1,...,i_{2d-2m}=1}^n  f^{\otimes 4} [j_1,...,j_m, i_1,...,i_{2d-2m}]\; ,
\end{align*}
}\normalsize
\noindent it is possible to handle the case when the $X_i$'s are independent centered r.v.'s, with unit variance, but not necessarily identically distributed, yielding that  $\E[Q_{\X}(f_n)^4]  \rightarrow 3$ is a sufficient condition for the convergence $Q_{\X}(f_n) \stackrel{\text{Law}}{\rightarrow} \mathcal{N}(0,1)$.
\end{rmk}

\begin{rmk}
An extension of Theorem \ref{superTeo1} in the setting of Gamma approximation of homogeneous sums of even degree $d\geq 2$ can be achieved in the following way. If $\nu >0$, let $G(\frac{\nu}{2})$ denote a random variable with Gamma distribution of parameter $\frac{\nu}{2}$, and set $F(\nu) \stackrel{\text{law}}{=} 2G(\frac{\nu}{2})- \nu$. If $\E[Q_{\mathbf{N}}(f_n)^2] \rightarrow 2\nu$ and $\E[Q_{\mathbf{N}}(f_n)^3] \rightarrow 8\nu$, from  identity (3.7) in \cite{NourdinPeccati2} it follows that
$$ \E[Q_{\mathbf{N}}(f_n)^4] -12 \E[Q_{\mathbf{N}}(f_n)^3]  - (12 \nu^2 - 48\nu) > 0,$$
for sufficiently large $n$.
Then, under the assumption $\E[X^3] = 0$, $\E[Q_{\X}(f_n)^3] = \E[Q_{\mathbf{N}}(f_n)^3]$, one has that:
\small{
\begin{align*}
\E[Q_{\mathbf{X}}(f_n)^4] & -12\E[Q_{\X}(f_n)^3] - (12 \nu^2 - 48\nu) = \\
& \E[Q_{\mathbf{N}}(f_n)^4]-12\E[Q_{\mathbf{N}}(f_n)^3] - (12 \nu^2 - 48\nu) \\
&+ \sum_{m=1}^d \binom{m}{d}^{4}m!^4 \chi_4(X)^m \sum_{j_1,\dots,j_m=1}^n \E[Q_{\mathbf{N}}(f_n(j_1,\dots,j_m,\cdot))^4],
\end{align*}
}
\normalsize
By exploiting \cite[Theorem 1.2]{NourdinPeccati2} and with the same strategy as in the proof of Theorem \ref{superTeo1}, it is possible to provide a Fourth Moment statement for $Q_{\mathbf{X}}(f_n)$ when the target is the the Gamma distribution (note that the universality of Gaussian homogeneous sums w.r.t. Gamma approximation has been established in \cite[Theorem 1.12]{NourdinPeccatiReinert} both for homogeneous sums with i.i.d. entries and with only independent entries).
\end{rmk}

\subsection{Proof of Theorem \ref{superTeo2}}

We shall show the series of implications \linebreak (iii)'$\to$(ii)'$\to$(i)'$\to$(iii)', with (i)', (ii)' and (iii)' as given in Definition \ref{defnoncom}.

{\it Proof of }(iii)'$\to$(ii)'.
Consider the formula (\ref{formula}) and bear in mind that $\kappa_4(Y) \geq 0$ due to our hypothesis. Let $S\sim \mathcal{S}(0,1)$ and let ${\bf S}=\{S_i\}_{i\geq 1}$
 be the corresponding sequence of freely independent copies.
We then have that $\kappa_4(Q_{\bf S}(f_n))=\varphi(Q_{\bf S}(f_n)^4) -2$ is positive,
see indeed \cite[Corollary 1.7]{NourdinPeccatiSpeicher}. On the other hand,
formula \eqref{formula} entails that
$$
d!^2 \kappa_4(Q_{\Y}(f_n))= d!^2 \varphi(Q_{\Y}(f_n)^4) - 2  \geq  d!^2\varphi(Q_{\bf S}(f_n)^4) - 2 >  \varphi(Q_{\bf S}(f_n)^4) - 2
$$
from which one deduces that, if (iii)' holds true, then $\varphi(Q_{\bf S}(f_n)^4)  \rightarrow 2$.
Since $S\sim \mathcal{S}(0,1)$ satisfies the ${NCFMT}_d$ (Theorem \ref{knps}), one has that (ii)' takes place.

{\it Proof of }(ii)'$\to$(i)'. It is the conclusion of Theorem \ref{invnoncom}.

{\it Proof of }(i)'$\to$(iii)'. It comes from the very definition of the convergence in law in the free case (that is, the convergence of all the moments).
\qed

\begin{rmk}
As in the commutative case, we have chosen to deal with the i.i.d. case just to ease the notation and the discussion. We could have considered a sequence $\Y=\{Y_i\}_{i \geq 1}$ of freely independent centered random variables, with unit variance, possibly not identically distributed, but starting from an inequality other than an equality. Indeed, for every $n\geq 1$, set $\beta_n = \min\limits_{i=1,\dots,n}\kappa_4(Y_i)$, and assume that there exists $\beta \geq 0$ such that $\inf\limits_{n \geq 1}\beta_n \geq \beta$. In particular, $\lim\limits_{n \rightarrow \infty}\beta_n = \inf\limits_{n \geq 1}\beta_n \geq \beta$. Then,
$$ \varphi\big( Q_{\mathbf{Y}}(f)^4 \big)  \geq  \varphi\big( Q_{\mathbf{S}}(f)^4 \big) + \beta \sum_{k=1}^n \varphi\big(Q_{\mathbf{S}}(f(k,\cdot))^4 \big),
$$
entailing that the sequence of freely independent random variables $\Y= \{Y_i\}_{i\geq 1}$ satisfies the Fourth Moment Theorem for homogeneous sums in dimension $d$ for semicircular approximation: that is, $d!^2 \kappa_4(Q_{\mathbf{Y}}(f_n)) \rightarrow 0$ is a necessary and sufficient condition for the convergence in law of $Q_{\Y}(f_n)$ to the semicircle law.
\end{rmk}

\begin{rmk}
Assume that $d \geq  2$ is even. Then, $\varphi\big(Q_\Y(f)^3\big) = \varphi\big(Q_\SSw(f)^3\big)$ (indeed, if $\mathcal{NC}_{>0}$ denotes the set of partitions with no singleton, $|\mathcal{NC}_{>0}^{\star}([3d])| = |\mathcal{NC}_2^{\star}([3d])|$) and so, if further we assume that $\varphi\big(Q_\Y(f)^2\big) = \varphi\big(Q_\SSw(f)^2\big) = \lambda> 0$, from (\ref{formula}) it follows that:
\small
\begin{align*}
\varphi(Q_\Y(f)^4)-2\varphi(Q_\Y(f)^3) - (2\lambda^2- \lambda) &= (\varphi(Q_\SSw(f)^4)-2\varphi(Q_\SSw(f)^3) -(2\lambda^2- \lambda)) \\
&+ \kappa_4(Y)\sum_{k=1}^{n}\varphi(Q_\SSw(k,\cdot)^4).
\end{align*}
\normalsize
From this formula, considering the Fourth Moment Theorem for free Poisson approximation of Wigner chaos established in \cite{NourdinPeccati},  the analogous of the Theorem \ref{superTeo2} with respect to the Free Poisson limit can be achieved.
\end{rmk}

\section{On the existence of thresholds for the fourth moment criterion}

In Section 2, we have shown that the condition {\bf(A)} together with $\E[X^4]\ge 3$ (resp. {\bf(B)} together with $\varphi(Y^4)\ge 2$) is sufficient to ensure that $X$ is $U_d$ and satisfies the $CFMT_d$ (resp. $Y$ is $FU_d$ and satisfies the $NCFMT_d$).

It is a natural question to determine whether this condition is necessary. In other words, for a given degree $d\geq 2$, what is the smallest real number $r_d\in (1,3]$ (resp. $s_d \in (1,2]$) such that, if $\E[X^4]$  (resp. $\varphi(Y^4)$) is greater than $r_d$ (resp. $s_d$), then
$X$ is $U_d$ and satisfies the $CFMT_d$ (resp. $Y$ is $FU_d$ and satisfies the $NCFMT_d$)?

\subsection{Existence of a threshold in the classical case}\label{nor}

So far, in the classical case we are able to determine the existence of the threshold only under the extra assumption that $X$ is $U_d$. Before stating our results, we need some preliminary results.

\begin{prop}\label{prop51}
Let $X$ satisfy {\bf (A)}, $U_d$ and $CFMT_d$, for $d\geq 2$. Then either $\chi_4(Q_\X(f)) < 0$ for every admissible kernel $f$, or $\chi_4(Q_\X(f)) > 0$ for every admissible kernel $f$.
\end{prop}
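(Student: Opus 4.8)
The plan is to use the hypotheses $U_d$ and $CFMT_d$ only through one clean consequence and then run a connectedness argument on the sphere of admissible kernels. For an admissible kernel $f$ write $\Phi(f):=\chi_4(Q_\X(f))=\E[Q_\X(f)^4]-3$, and observe two elementary facts: $\Phi$ is a polynomial (hence continuous) in the entries of $f$, and $\Phi$ is invariant under zero-padding. Precisely, if $f:[n]^d\to\R$ is admissible and $\tilde f:[n']^d\to\R$ (with $n'>n$) denotes its extension by $0$, then $\tilde f$ is again admissible and $Q_\X(\tilde f)=Q_\X(f)$ as random variables, so $\Phi(\tilde f)=\Phi(f)$ (and likewise $\chi_4(Q_\NN(\tilde f))=\chi_4(Q_\NN(f))$). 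The consequence I would extract from the hypotheses is the implication
\[
\chi_4(Q_\X(f_n)) \to 0 \quad\Longrightarrow\quad Q_\NN(f_n) \xrightarrow{\text{\rm Law}} \mathscr{N}(0,1),
\]
valid for every sequence of admissible kernels: indeed $\chi_4(Q_\X(f_n))\to 0$ is exactly condition (iii) of Definition \ref{defcom}, which gives (i) by $CFMT_d$, which in turn gives (ii) by $U_d$.

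First I would show that $\Phi$ never vanishes. Suppose, for contradiction, that $\Phi(f_0)=0$ for some admissible $f_0:[n]^d\to\R$. Zero-padding $f_0$ to every level $m\ge n$ produces a sequence of admissible kernels along which $\chi_4(Q_\X(\cdot))\equiv 0$; the displayed implication then yields $Q_\NN(f_0^{(m)})\xrightarrow{\text{\rm Law}}\mathscr{N}(0,1)$. Since this sequence is constant in law, its limit law is the law of $Q_\NN(f_0)$, so $Q_\NN(f_0)\law\mathscr{N}(0,1)$. This is impossible: a nonzero homogeneous sum of degree $d\ge 2$ in i.i.d. standard Gaussians is never Gaussian, equivalently $\chi_4(Q_\NN(f_0))>0$ for every nonzero admissible kernel (the strict positivity, beyond the bound $\ge 0$ of \cite[Lemma 5.2.4]{NPbook}, coming from the non-degenerate contraction of order $d-1$). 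As $f_0$ admissible forces $f_0\ne 0$, we reach a contradiction, and hence $\Phi(f)\ne 0$ for every admissible $f$.

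It then remains to see that the (nonzero) sign of $\Phi$ is constant. Given two admissible kernels, I would first zero-pad them to a common level $n$ large enough that $\binom{n}{d}\ge 2$; by the invariance noted above this alters neither admissibility nor the values of $\Phi$. The set $\mathcal{A}_n$ of admissible kernels on $[n]^d$ is exactly the sphere $\{f : d!\sum f^2=1\}$ inside the $\binom{n}{d}$-dimensional space of symmetric kernels vanishing on diagonals; for $\binom{n}{d}\ge 2$ this sphere is path-connected. Since $\Phi$ is continuous and nowhere zero on the connected set $\mathcal{A}_n$, it has constant sign there, so the two given kernels yield $\Phi$-values of the same sign. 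As they were arbitrary, $\Phi$ carries one and the same strict sign over all admissible kernels, which is precisely the claimed dichotomy.

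The main obstacle — and the only step requiring more than bookkeeping — is the non-vanishing claim: one must be certain that a nonzero Gaussian homogeneous sum of order $d\ge 2$ is genuinely non-Gaussian, equivalently that $\chi_4(Q_\NN(\cdot))$ is \emph{strictly}, not merely weakly, positive off the zero kernel. Everything else is routine: the invariance of $\Phi$ under zero-padding, the fact that the definitions of $CFMT_d$ and $U_d$ (phrased for sequences with $n\to\infty$) apply verbatim to the eventually-constant zero-padded sequences used above, and the elementary connectedness of the kernel sphere.
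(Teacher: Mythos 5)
Your proof is correct and follows essentially the same route as the paper's: first show $\chi_4(Q_\X(f))\neq 0$ by applying $CFMT_d$ and then $U_d$ to a (zero-padded) constant sequence to force the absurdity $Q_\NN(f)\sim\mathscr{N}(0,1)$, and then conclude by a sign-constancy argument. The only cosmetic difference is in the second step, where the paper runs the intermediate value theorem along the explicit normalized segment $f_t=\bigl(tf_1+(1-t)f_0\bigr)/\sqrt{\E[(tQ_\X(f_1)+(1-t)Q_\X(f_0))^2]}$ rather than invoking connectedness of the kernel sphere; your version is, if anything, slightly more careful about the indexing of the constant sequence and about why the sphere is connected.
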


\begin{proof}
\underline{Step 1:} First, we prove that if $X$ is $CFMT_d$ and $U_d$, then \linebreak $\chi_4(Q_\X(f)) \neq 0$ for every admissible kernel $f$. Indeed, if there exists $f$ such that $\chi_4(Q_{\X}(f)) = 0,$ then the constant sequence $Q_{\X}(f)$ will be normal, and then, being $X$ $U_d$, we would have $Q_{\mathbf{N}}(f) \stackrel{\text{law}}{=} \mathcal{N}(0,1)$, which is absurd. \\

\underline{Step 2:} Assume now that there exist two admissible kernels $f_{0}$ and $f_{1}$ such that $\E[Q_{\mathbf{X}}(f_{0})^4]>3$ and $\E[Q_{\mathbf{X}}(f_{1})^4]<3$. Consider, for every $t \in [0,1]$, the admissible kernel
$$
f_t = \dfrac{ t f_1+ (1-t)f_0}{\sqrt{\E[(t Q_{\mathbf{X}}(f_{1}) + (1-t)Q_{\mathbf{X}}(f_{0}))^2] }}.
$$
Since $\chi_4\big(Q_{\mathbf{X}}(f_{1})\big) < 0$ and $\chi_4\big(Q_{\mathbf{X}}(f_{0})\big) > 0$, there exists $t^{\star} \in (0,1)$ such that $\chi_4\big(Q_{\mathbf{X}}(f_{t^\star})\big) = 0$, which is impossible due to the conclusion of the first step.
\end{proof}

\begin{rmk}
It is always possible to construct a homogeneous sum with positive fourth cumulant. Indeed, for $n$ large enough, consider the homogeneous polynomial (with admissible kernel $g_n$)
$$Q_{\mathbf{X}}(g_n) =
X_1\times\dfrac{X_2^{(1)} \cdots X_d^{(1)} + \cdots + X_2^{(n-1)} \cdots X_d^{(n-1)}}{\sqrt{n-1}} ,$$ where $X_{j}^{(i)}$ is a sequence of independent copies of $X$. A direct computation provides
$$
\E[Q_{\mathbf{X}}(g_n)^4]=\E[X^4]\left(3+\dfrac{\E[X^4]^{d-1}-3}{n-1}\right) \to 3\E[X^4] > 3.
$$
\end{rmk}

\begin{prop}
Let $X$ satisfy {\bf (A)}, $CFMT_d$ and $U_d$. Then necessarily $\E[X^4] > \sqrt[d]{3}$.
\end{prop}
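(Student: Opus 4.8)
The plan is to read the threshold off a single, maximally concentrated kernel, after fixing the sign of the fourth cumulant via Proposition \ref{prop51}.

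First I would introduce the kernel that realizes the extremal fourth moment. For $n\geq d$, let $h:[n]^d\to\R$ be the admissible kernel supported on the permutations of $(1,\dots,d)$ (value $1/d!$ on each such tuple, $0$ elsewhere), so that $Q_\X(h)=X_1\cdots X_d$. Since $Q_\X(h)$ is centered, an elementary computation using $\E[X^2]=1$ and independence gives $\E[Q_\X(h)^2]=\E[X^2]^d=1$ and $\E[Q_\X(h)^4]=\E[X^4]^d$, whence
\[
\chi_4\big(Q_\X(h)\big)=\E[X^4]^d-3 .
\]
This is the crucial observation: the degree $d$ and the Gaussian target $3$ combine exactly into the conjectured threshold, because $\E[X^4]^d-3>0$ is equivalent to $\E[X^4]>\sqrt[d]{3}$.

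Next I would pin down the sign. By Proposition \ref{prop51}, under the standing hypotheses (A), $U_d$ and $CFMT_d$, the number $\chi_4(Q_\X(f))$ has the same nonzero sign for every admissible kernel $f$. To decide which sign occurs I would exhibit one kernel with strictly positive fourth cumulant, namely the sequence $g_n$ of the Remark following Proposition \ref{prop51}, for which
\[
\chi_4\big(Q_\X(g_n)\big)=\E[X^4]\Big(3+\tfrac{\E[X^4]^{d-1}-3}{n-1}\Big)-3\;\longrightarrow\;3\big(\E[X^4]-1\big),
\]
so that $\chi_4(Q_\X(g_n))>0$ for all large $n$ as soon as $\E[X^4]>1$. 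Combining this positive instance with the dichotomy of Proposition \ref{prop51} forces $\chi_4(Q_\X(f))>0$ for \emph{every} admissible kernel; applying this to $f=h$ yields $\E[X^4]^d-3>0$, i.e. $\E[X^4]>\sqrt[d]{3}$.

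The only point to settle, and the part I expect to be the real subtlety, is the borderline value $\E[X^4]=1$, where the witness $g_n$ degenerates ($\chi_4(Q_\X(g_n))\to 0$) and the fourth-moment dichotomy alone decides nothing. Here I would invoke Cauchy--Schwarz, $\E[X^4]\geq(\E[X^2])^2=1$, noting that equality forces $X^2\equiv 1$, i.e. $X$ is a symmetric Bernoulli variable. For such an $X$ the very same $g_n$ contradict $U_d$: since $Q_\X(g_n)=X_1\cdot\tfrac{1}{\sqrt{n-1}}\sum_i X_2^{(i)}\cdots X_d^{(i)}$ converges to $X_1N'$, which equals $\mathscr{N}(0,1)$ in law ($X_1$ being an independent symmetric sign), whereas $Q_{\NN}(g_n)$ converges to a product of two independent Gaussians, which is not Gaussian; thus relation (i) of Definition \ref{defcom} holds while (ii) fails, so $X$ is not $U_d$. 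Hence $\E[X^4]=1$ is incompatible with the hypotheses, the case $\E[X^4]>1$ always prevails, and the argument above applies. The decisive (but elementary) ingredient remains the single-product kernel $h$, which converts the sign of the fourth cumulant into the sharp inequality $\E[X^4]^d>3$.
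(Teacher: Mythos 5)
Your proof is correct and follows essentially the same route as the paper: evaluate the fourth cumulant on the product kernel realizing $X_1\cdots X_d$ to obtain $\E[X^4]^d-3$, use the kernel $g_n$ of the preceding remark to fix the positive sign in the dichotomy of Proposition \ref{prop51}, and conclude $\E[X^4]>\sqrt[d]{3}$. Your explicit dispatch of the borderline case $\E[X^4]=1$ (Rademacher, ruled out by showing $g_n$ violates $U_d$) is a welcome refinement: the paper silently restricts to $\E[X^4]\in(1,\sqrt[d]{3})$ and only addresses the Rademacher case in a later remark.
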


\begin{proof}
As a consequence of the previous proposition, if $X$ is $CFMT_d$ and $U_d$, then $\E[X^4] \neq \sqrt[d]{3}$ (otherwise, for $Q_\X(f) = X_1 \cdots X_d$ we would have $\chi_4(Q_{\X}(f))=0$).
Now we will proceed by contradiction: assume that $\E[X^4] \in (1,\sqrt[d]{3})$. Then, for $Q_\X(f) = X_1 \cdots X_d$, $\E[Q_{\X}(f)^4] < 3$, and hence we expect $\E[Q_{\X}(g)^4] < 3$ for any other admissible kernel $g$, which contradicts the previous remark.
 Hence if $\E[X^4] < \sqrt[d]{3}$, $X$ cannot satisfy the $CFMT_d$.
\end{proof}

The main result of this section is stated in the next theorem.

\begin{thm}
\label{ThresholdClassic}
Let $d\geq 2$. Then, there exists a real number $r_d\in(\sqrt[d]{3},3]$ such that, for any random variable $X$ satisfying {\bf(A)} and being $U_d$, the following are equivalent:
\begin{enumerate}
\item $X$ satisfies the $CFMT_d$;
\item $\E[X^4] \geq r_d$.
\end{enumerate}

\end{thm}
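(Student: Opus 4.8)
The plan is to turn the statement into a one--parameter problem governed by $c:=\chi_4(X)=\E[X^4]-3$, to encode all homogeneous sums through a single compact ``profile'' set, and to read off $r_d$ as a monotone threshold.

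\textbf{Reduction to a single parameter.} First I would observe that, for $X$ satisfying {\bf (A)} and $U_d$, the property $CFMT_d$ depends on the law of $X$ only through $c$. Combining the definition of $U_d$ (namely (i)$\Rightarrow$(ii)) with the invariance principle of Theorem \ref{inv} ((ii)$\Rightarrow$(i)) and the Gaussian Fourth Moment Theorem \ref{np}, one gets that, for a $U_d$ variable, $Q_\X(f_n)\xrightarrow{\text{Law}}N$ holds iff $\chi_4(Q_\NN(f_n))\to0$. Hence $X$ is $CFMT_d$ iff, for every admissible sequence $(f_n)$, the convergence $\chi_4(Q_\X(f_n))\to0$ forces $\chi_4(Q_\NN(f_n))\to0$. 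By formula \eqref{formulaGauss} one has $\chi_4(Q_\X(f_n))=\chi_4(Q_\NN(f_n))+\sum_{m=1}^d\binom dm^4 m!^4\,c^m\,T_m(f_n)$ with $T_m(f_n):=\sum_{j_1,\dots,j_m=1}^n\E[Q_\NN(f_n(j_1,\dots,j_m,\cdot))^4]\ge0$, so this criterion involves $X$ only through $c$. Call the resulting property $P(c)$.

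\textbf{A distribution--free criterion.} Using the hypercontractive bounds $0\le\chi_4(Q_\NN(f))\le 3^{2d}$ and $0\le T_m(f)\le C_d$, the vectors $(\chi_4(Q_\NN(f_n)),T_1(f_n),\dots,T_d(f_n))$ accumulate, along subsequences, in a compact set $\mathcal K\subset\R_{\ge0}^{d+1}$. A direct--sum (block) construction of kernels shows that $\mathcal K$ is stable under scaling by $[0,1]$ and under the operation $(u,v)\mapsto t^2u+(1-t)^2v$, along which the limiting functional $g_v(c):=\rho+\sum_{m=1}^d\binom dm^4 m!^4\, c^m\sigma_m$ (for $v=(\rho,\sigma_1,\dots,\sigma_d)$) is additive. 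This yields the clean reformulation: $P(c)$ fails iff there is $v\in\mathcal K$ with $\rho(v)>0$ and $g_v(c)\le0$; interpolating such a $v$ with the always--positive witness coming from the homogeneous sum $g_n$ of the Remark (for which $g_v(c)=3(c+2)>0$ on $(-2,0)$) upgrades ``$\le 0$'' to an exact root. The two boundary values are then immediate: for $c\ge0$ every term of $g_v(c)$ is nonnegative, so $g_v(c)\ge\rho>0$ and $P(c)$ holds, giving $r_d\le 3$ (consistently with Theorem \ref{superTeo1}); while the monomial $X_1\cdots X_d$ has $g_v(c)=(c+3)^d-3$, vanishing at $c=\sqrt[d]3-3$, so by Step 1 of Proposition \ref{prop51} the property $P$ fails there.

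\textbf{Monotonicity, threshold and endpoint.} It remains to prove that $\{c:P(c)\text{ holds}\}$ is upward closed and closed from below; granting this, one sets $r_d:=3+\inf\{c:P(c)\text{ holds}\}$, and the equivalence $(1)\Leftrightarrow(2)$ follows, with $r_d\le 3$ from the previous step and $r_d>\sqrt[d]3$ because $P$ fails at $c=\sqrt[d]3-3$ while $P$ holds on $[\,r_d-3,\infty)$. The monotonicity --- equivalently, that a failure at $c_1$ propagates to every $c_0<c_1$ --- is the main obstacle. The difficulty is that, for a \emph{fixed} profile $v$, the polynomial $c\mapsto g_v(c)$ has sign--alternating coefficients and is not monotone for $c<0$, so a single witness cannot merely be reused: from $g_v(c_1)=0$ one controls the odd--order part (it equals $-\rho$ minus the nonnegative even--order part), but on passing to $c_0<c_1$ the even--order terms $\binom d{2k}^4(2k)!^4 c^{2k}\sigma_{2k}$ \emph{grow} and work against failure. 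The plan is to neutralise them by combining, through the block construction, the $c_1$--witness with the monomial and with the positive witness $g_n$, thereby producing for each $c_0<c_1$ a new kernel sequence with $\rho>0$ and $g(c_0)\le0$; the finer estimates on the $T_m$ (in particular that the even--order contributions stay dominated by the odd ones in the relevant range) are precisely what this step requires. The closedness at $r_d$ is a secondary, softer point: by compactness of $\mathcal K$ the failure witnesses must degenerate ($\rho\to0$, hence all $\sigma_m\to0$) as $c$ increases to the threshold, so no valid witness with $\rho>0$ survives at $r_d$, whence $P(r_d)$ holds.
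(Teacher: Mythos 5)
Your reduction to the single parameter $c=\chi_4(X)$, the reformulation of $CFMT_d$ (for a $U_d$ variable) as the property $P(c)$, and the two boundary computations ($P(c)$ holds for $c\ge 0$ by Theorem \ref{superTeo1}; $P$ fails at $c=\sqrt[d]{3}-3$ via the monomial $X_1\cdots X_d$ and Step 1 of Proposition \ref{prop51}) are all sound and match the logical skeleton of the theorem. But there is a genuine gap exactly where you locate ``the main obstacle'': the upward-closedness of $\{c: P(c)\ \text{holds}\}$ is never proved. What you give is a plan --- neutralise the even-degree terms of $g_v$ by block-combining the $c_1$-witness with the monomial and a positive witness --- followed by the admission that ``finer estimates on the $T_m$'' are ``precisely what this step requires''. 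Those estimates are the entire content of the theorem beyond the preliminaries. The difficulty is structural: the direct-sum operation on kernels only produces conic combinations of profile vectors $(\rho,\sigma_1,\dots,\sigma_d)$ with nonnegative entries, so from $g_v(c_1)=0$ you cannot cancel the even-order terms $\binom{d}{2k}^4(2k)!^4c^{2k}\sigma_{2k}$ that grow as $c$ decreases; no combination of nonnegative profiles changes the sign pattern of the polynomial. This is exactly the obstruction the authors themselves flag in Section 5.2, where they observe that the subtraction strategy available in the free case (where the analogue of \eqref{formulaGauss} is \emph{linear} in the fourth cumulant) ``cannot be adapted'' to the classical setting precisely because \eqref{formulaGauss} is a degree-$d$ polynomial in $\chi_4(X)$.

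The paper circumvents this with a device absent from your proposal: a multiplicative mixture. Given $X$ (satisfying {\bf (A)}, $U_d$ and $CFMT_d$) and $Z$ with $\E[Z^4]\ge\E[X^4]$, one constructs $T=\sqrt{V_1\cdots V_q}$ independent of $X$, bounded below by some $x>0$, with $\E[T^2]=1$ and $\E[T^4]=\E[Z^4]/\E[X^4]$, so that $\E[Q_{\TX}(f_n)^4]=\E[Q_{\mathbf{Z}}(f_n)^4]$. Conditioning on $\{T_i\}$, using the nonnegativity of the conditional fourth cumulant, one extracts from $\E[Q_{\mathbf{Z}}(f_n)^4]\to 3$ the almost sure conditional convergence of second and fourth moments; the $CFMT_d$ and $U_d$ properties of $X$ together with $T_i\ge x>0$ then yield the vanishing-influence condition for $f_n$, and de Jong's criterion concludes that $Z$ is $CFMT_d$. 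Your closedness-at-the-endpoint argument is also not a proof: compactness of $\mathcal{K}$ does not force the failure witnesses to degenerate as $c$ increases to the threshold, and $\rho\to 0$ would in any case not imply $\sigma_m\to 0$. As written, the proposal establishes only the two easy inclusions and leaves the existence of the threshold unproven.
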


\begin{proof}
Let $X$ satisfy the $CFMT_d$ and $U_d$. Then, as a result of the above discussion, $\E[Q_{\X}(f)^4] > 3$ for every admissible kernel $f$. Let $Z$ be a centered random variable, with $\E[Z^3]=0$ and unit variance, such that $\E[Z^4] \geq \E[X^4]$. To obtain the existence of the desired threshold $r_d$, it is enough to show that $Z$ satisfies the $CFMT_d$ as well. We will proceed in several steps, considering mixtures between $X$ and a suitable random variable $T$.\\

\underline{Step\;1}.
Set $\theta=\E[Z^4]/\E[X^4]-1$, let $q\in\mathbb{N}$, and
set $\alpha=\sqrt{(1+\theta)^{1/q}-1}$. Since
$(1+\theta)^{1/q}\to 1$ as $q\to\infty$, one may and will choose $q$ large enough so that $\alpha\in [0,1)$. Now, let $V_1,\ldots,V_q$ be independent copies with distribution $\frac{1}{2}(\delta_{1-\alpha} + \delta_{1+\alpha})$. Assume further that $V_1,\ldots,V_q$ and $X$ are independent. Then, the random variable
$T=\sqrt{V_1\ldots V_q}$ is independent of $X$, takes its values in $[x,\infty[$ with $x=(1-\alpha)^{q/2}>0$, and satisfies $\E[T^2]=1$ and $\E[T^4]=
(1+\alpha^2)^q=
\E[Z^4]/\E[X^4]$.

Let $X_1,X_2,\ldots$ (resp. $T_1,T_2,\ldots$ and $Z_1,Z_2,\ldots$) be a sequence of independent copies of $X$ (resp. $T$ and $Z$).\\

\underline{Step\;2}.
Set $Q_{\TX}(f_n)=\sum\limits_{i_1,\cdots,i_d=1}^nf_n(i_1,\cdots,i_d)(T_{i_1}X_{i_1})\cdots(T_{i_d}X_{i_d})$. We deduce from Step 1 that $\E[Q_{\TX}(f_n)^2]=1$ and
$\E[Q_{\TX}(f_n)^4]=\E[Q_{\mathbf{Z}}(f_n)^4]$.
Separating the expectation according to the sequences $\{T_i\}$ and $\{X_i\}$ (setting $\E_\T[\cdot]=\E[\cdot|\X]$ and $\E_\X[\cdot] = \E[\cdot|\T]$), we can write:
\begin{eqnarray}
&& \E[Q_{\mathbf{Z}}(f_n)^4]- 3 =\notag \\
&=& \E_\T\left[\E_\X\left[Q_{\TX}(f_n)^4\right]-3\right]  \notag\\ 
&=&\E_\T \left[\E_\X\left[Q_{\TX}(f_n)^4\right] - 6\E_\X\left[Q_{\TX}(f_n)^2\right]+3\right]\notag \\ 
&=& \E_\T\left[\left(\E_\X\left[Q_{\TX}(f_n)^4\right]-3\E_\X\left[Q_{\TX}(f_n)^2\right]^2\right)+3\left(\E_\X\left[Q_{\TX}(f_n)^2\right]-1\right)^2\right].\notag\\
&&\label{eq-T-N-2}
\end{eqnarray}

Since $\E[X^4] > \sqrt[d]{3}$ and $X$ is $U_d$ and satisfies the $CFMT_d$, almost surely in $\{T_i\}_{i\ge 1}$ one has, according to Proposition \ref{prop51}, that:
$$\chi_4(\E_{\mathbf{X}}[Q_{\TX}(f_n)]) = \E_\X\left[Q_{\TX}(f_n)^4\right]-3\E_\X\left[Q_{\TX}(f_n)^2\right]^2\ge 0.$$

Assume now that $\E[Q_{\mathbf{Z}}(f_n)^4]\to 3$ as $n \rightarrow \infty$, that is,
\[
\E_\T\left[ \left(\E_\X\left[Q_{\TX}(f_n)^4\right]-3\E_\X\left[Q_{\TX}(f_n)^2\right]^2\right)+3\left(\E_\X\left[Q_{\TX}(f_n)^2\right]-1\right)^2       \right ] \to 0 ,
\]
see indeed (\ref{eq-T-N-2}).
Up to extracting a subsequence and due to the positivity of summands, we deduce that, almost surely in $\{T_i\}_{i\ge1}$,
\begin{eqnarray}
\E_\X\left[Q_{\TX}(f_n)^2\right]&\to& 1,\notag\\
\E_\X\left[Q_{\TX}(f_n)^4\right]&\to& 3.\label{ymca}
\end{eqnarray}

\underline{Step\; 3}.
Since $X$  satisfies the $CFMT_d$,
we deduce from (\ref{ymca}) that, almost surely in $\{T_i\}$,
\begin{eqnarray*}
Q_{\TX}(f_n)&\xrightarrow[n\to\infty]{\text{Law}}&\mathcal{N}(0,1).
\end{eqnarray*}
But $X$ is assumed to be $U_d$ so, by Theorem \ref{inv}, it follows
that, almost surely in $\{T_i\}_{i\ge1}$,
\begin{eqnarray*}
\max_{1\leq i_1\leq n}\sum_{i_2,\cdots,i_d=1}^n f_n(i_1,i_2,\cdots,i_d)^2 (T_{i_1}\cdots T_{i_d})^2 &\xrightarrow[n\to\infty]{\text{}}&0.
\end{eqnarray*}
Using that $T_i\geq x>0$ for all $i$, we finally deduce that
$$
\max_{1\leq i_1\leq n}\sum_{i_2,\cdots,i_d=1}^n f_n(i_1,\cdots,i_d)^2  \xrightarrow[n\to\infty]{\text{}}0.
$$
This condition and de Jong's criterion (see \cite[Theorem 1.9]{NourdinPeccati4} for a modern proof),
ensure that $Z$ satisfies the $CFMT_d$ as well. Besides, the law of $Z$ is $U_d$.\\

\underline{Step 4} (Conclusion). The desired threshold $r_d$ is thus given as being
the smallest real number $t\in (\sqrt[d]{3},3]$ such that there exists $X$ satisfying {\bf(A)}, the $CFMT_d$, being $U_d$ and with $\E[X^4]=t$.
\end{proof}

\begin{rmk}
In \cite[Proposition 4.6]{NourdinPeccatiReinert}, the authors provided the $CFMT_2$ when $\E[X^4]=1$, that is, when we are dealing with the Rademacher chaos of order 2. The case $d\geq 3$ is still open to the best of our knowledge. Neverthless, the reader should bear in mind that Rademacher chaos is not $U_d$ (see, for instance, \cite{NourdinPeccatiReinert}), and hence such result is not in contrast with our approach.\\
\end{rmk}

\subsection{Existence of a threshold in the free case}

In order to generalize  in the free probability setting the technique of the mixtures used in Section \ref{nor}, one should consider a sequence $\{Z_i\}$ of freely independent random variables, freely independent of $\{S_i\}$ in such a way that the $Z_i$'s and the $S_j$'s commute. But this is possible only if $Z_i$ has vanishing variance (see \cite[Lecture 5]{Speicher}).
Hence we will adopt a different approach to reach the free counterpart of Theorem \ref{superTeo1}, which, in the meantime, will highlight the simplicity and usefulness  of formula \eqref{formula}, that avoid us to assume  that $Y$ is $FU_d$. Let us remark that since formula \eqref{formulaGauss} is not linear in $\chi_4(X)$, the following strategy cannot be adapted for the determination of the threshold in the classical case.

\begin{rmk}
It is always possible to construct a sequence of homogeneous sum $Q_{\bf Y}(g_n)$ converging in distribution towards an element of the $d$-th Wigner chaos, and hence having positive fourth cumulant. Indeed, if $\mathbf{Y}=\{Y_i\}_{i\geq 1}$ is a sequence of freely independent and identically distributed random variables, centered, and with unit variance, for every $n \in \mathbb{N}$ and every $i=1,\dots,n$, set:
$$ Z_{n}^{(i)} = \dfrac{1}{\sqrt{n}}\sum_{j=1}^n Y_{(j-1)d + i}. $$
Then, consider the homogeneous sum $ Q_{\Y}(g_n) = d!^{-1} \sum\limits_{ \sigma \in \Sigma_d } Z_n^{(\sigma(1))}\cdots Z_n^{(\sigma(d))}.$ First, the free CLT states that $\{Z_n^{(i)}\}_n \stackrel{\text{law}}{\rightarrow} S^{(i)} \sim \mathcal{S}(0,1)$ for every $i=1,\dots,d$, with the $S^{(i)}$'s freely independent. Then, the multidimensional CLT (see \cite[Theorem 8.17]{Speicher}) assures that $Q_{\Y}(g_n)$ converges to an element in the $d$-th chaos of Wigner. Since any element of the $d$-th Wigner chaos has a stricly positive fourth cumulant (see \cite[Corollary 1.7]{NourdinPeccatiSpeicher}), we get that $\kappa_4(Q_{\Y}(g_n))>0$ for all $n$ large enough.
\end{rmk}

\begin{thm}
\label{ThresholdFree}
Let $d\geq 2$. There exists a real number $s_d\in (1,2]$ such that, for any random variable $Y$ satisfying {\bf(B)},
\begin{enumerate}
\item[(i)] If $\varphi(Y^4) \geq s_d$, then $Y$ satisfies the $NCFMT_d$;
\item[(ii)] If $1<\varphi(Y^4)<s_d$, then $Y$ does not satisfy the $NCFMT_d$.
\end{enumerate}
\end{thm}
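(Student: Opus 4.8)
The plan is to exploit the \emph{linearity} of formula \eqref{formula} in the free cumulant $\kappa_4(Y)=\varphi(Y^4)-2$. Since $\varphi(Q_\Y(f)^2)=d!^{-1}$, subtracting $2\,d!^{-2}$ from both sides of \eqref{formula} converts it into a statement about fourth cumulants,
$$\kappa_4(Q_\Y(f))=a(f)+\kappa_4(Y)\,b(f),\qquad a(f):=\kappa_4(Q_\SSw(f))\ge 0,\quad b(f):=\sum_{k=1}^n\varphi(Q_\SSw(f(k,\cdot))^4)\ge 0,$$
where the coefficients depend only on the kernel (the inequality $a(f)\ge0$ is \cite[Corollary 1.7]{NourdinPeccatiSpeicher}, and $b(f)\ge0$ since it is a sum of fourth moments). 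Condition (iii)' then reads $a(f_n)+\kappa_4(Y)\,b(f_n)\to 0$. My aim is to show that whether $Y$ satisfies the $NCFMT_d$ depends on $Y$ only through $\kappa:=\kappa_4(Y)$ and is monotone in $\kappa$; the threshold will then be $s_d=2+\kappa^\star$ for the appropriate cut-off value $\kappa^\star$.

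The key reduction is the \emph{universality equivalence}: for every $Y$ satisfying {\bf (B)},
$$\sqrt{d!}\,Q_\Y(f_n)\xrightarrow{\text{Law}}S\quad\Longleftrightarrow\quad a(f_n)=\kappa_4(Q_\SSw(f_n))\to 0.$$
The implication ``$\Leftarrow$'' is immediate: $a(f_n)\to0$ means $\sqrt{d!}Q_\SSw(f_n)\to S$ (Theorem \ref{knps}), and Theorem \ref{invnoncom} transfers this to $Q_\Y$. Granting the equivalence, $Y$ satisfies the $NCFMT_d$ if and only if the property
$$P(\kappa):\qquad\text{for every admissible sequence } f_n,\quad a(f_n)+\kappa\,b(f_n)\to 0\ \Longrightarrow\ a(f_n)\to 0$$
holds for $\kappa=\kappa_4(Y)$, which manifestly depends on $Y$ through $\kappa_4(Y)$ alone. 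Since $a,b\ge0$, $P(\kappa)$ holds trivially for every $\kappa\ge0$ (whence $s_d\le2$, consistently with Theorem \ref{superTeo2}); and it can fail only for $\kappa<0$, in which case $a(f_n)+\kappa b(f_n)\to0$ with $a(f_n)\not\to0$ forces $b(f_n)\not\to0$ and $a(f_n)/b(f_n)\to|\kappa|$. Thus $P(\kappa)$ fails exactly when $|\kappa|$ is an attainable asymptotic ratio $\lim a(f_n)/b(f_n)$ along a sequence with $a(f_n)$ bounded away from $0$.

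To obtain monotonicity I would prove that the set $\mathcal{R}$ of such attainable ratios is an up-set, i.e. $\mathcal R=(\rho_0,\infty)$ with $\rho_0=\inf\mathcal R$. Given an admissible kernel $f$ on an index block $A$ and an admissible kernel $g$ on a disjoint block $B$, form $h=\lambda\,\tilde f+\mu\,\tilde g$ with $\lambda^2+\mu^2=1$ (extending $f,g$ by $0$). As $\{S_i:i\in A\}$ and $\{S_i:i\in B\}$ are free, homogeneity and additivity of free cumulants give $a(h)=\lambda^4a(f)+\mu^4a(g)$, and a block-by-block computation gives $b(h)=\lambda^4b(f)+\mu^4b(g)$; hence $a(h)/b(h)$ is a mediant of the two ratios and sweeps the interval between them as $\lambda/\mu$ varies. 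Combining any attainable ratio with the ``spread-out'' sequence $g_n$ of the Remark preceding the theorem (for which $a(g_n)$ tends to the positive fourth cumulant of a Wigner-chaos element while $b(g_n)\to0$, i.e. ratio $\to\infty$) then produces every ratio above it, so $\mathcal R=(\rho_0,\infty)$ and $\{\kappa:P(\kappa)\text{ holds}\}=[\kappa^\star,\infty)$ (up to the boundary) with $\kappa^\star=-\rho_0$. This gives the threshold $s_d=2+\kappa^\star=2-\rho_0\le2$. To place it strictly above $1$ it suffices to exhibit one admissible kernel with $0<a(f)<b(f)$: for $d=2$ the anticommutator kernel, for which $Q_\SSw(f)=\tfrac12(S_1S_2+S_2S_1)$, gives $a(f)=\tfrac18$ and $b(f)=\tfrac14$, ratio $\tfrac12<1$, whence $s_2\ge\tfrac32$; an analogous localized kernel yields some ratio $<1$ for general $d$ (a routine, $d$-dependent computation), so $\rho_0<1$ and $s_d>1$.

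The main obstacle is the ``$\Rightarrow$'' half of the universality equivalence, namely that $\sqrt{d!}Q_\Y(f_n)\to S$ forces $a(f_n)\to0$. Convergence of moments only yields $\kappa_4(Q_\Y(f_n))=a(f_n)+\kappa b(f_n)\to0$, which for $\kappa<0$ does not by itself separate $a(f_n)$ from $b(f_n)$; one additionally needs that convergence to the semicircle entails the vanishing of the maximal influence $\max_k\sum_{k_2,\dots,k_d}f_n(k,k_2,\dots,k_d)^2$, after which free hypercontractivity in the degree-$(d-1)$ Wigner chaos gives $b(f_n)\le C(\max_kI_k)\sum_kI_k\to0$ and hence $a(f_n)\to0$. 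This is the free counterpart of the last display of Theorem \ref{inv} and should be extracted from the invariance principle of \cite{NourdinDeya}; proving it for an \emph{arbitrary} $Y$ satisfying {\bf (B)}, rather than for the semicircular reference alone, is the delicate point. A more hands-on alternative, sufficient for part (ii) in isolation, is to realize each failing ratio by a \emph{constant} kernel sequence $f_n\equiv f$, for which $\sqrt{d!}Q_\Y(f)$ is a fixed law whose higher free cumulants are generically nonzero, so that convergence to $S$ fails by inspection.
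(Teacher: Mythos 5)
Your starting point --- the linearity of \eqref{formula} in $\kappa_4(Y)$, written as $\kappa_4(Q_\Y(f))=a(f)+\kappa_4(Y)b(f)$ with $a,b\ge 0$ --- is exactly the engine of the paper's proof, and your mediant/direct-sum construction on disjoint index blocks is a genuinely different (and rather elegant) route to monotonicity: the paper instead gets monotonicity by subtracting the two instances of \eqref{formula} for $Y$ and $Z$ and using that \emph{both} terms on the right-hand side are nonnegative. But that nonnegativity of $\kappa_4(Q_\Y(f_n))$ is precisely what the paper must work to establish, and the point where it does so is the one you have left open. The gap you yourself flag --- the ``$\Rightarrow$'' half of your universality equivalence, i.e.\ that $\sqrt{d!}\,Q_\Y(f_n)\to \mathcal S(0,1)$ forces $a(f_n)=\kappa_4(Q_\SSw(f_n))\to 0$ for an \emph{arbitrary} $Y$ with $\kappa_4(Y)<0$ --- is not a technical loose end but the crux: Theorem \ref{invnoncom} only transfers \emph{from} the semicircular family, and without the converse your property $P(\kappa)$ is merely sufficient for the $NCFMT_d$, so part (ii) is unproven. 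Your fallback (a constant kernel $f$ with $\kappa_4(Q_\Y(f))=0$ whose higher free cumulants are ``generically nonzero'') does not work as stated either, because (ii) is a claim about \emph{every} $Y$ with $1<\varphi(Y^4)<s_d$, and for a fixed $Y$ you would have to actually rule out that $Q_\Y(f)$ is semicircular.

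The paper's resolution of exactly this point is the idea missing from your proposal. Suppose a kernel $h$ with $\kappa_4(Q_\Y(h))=0$ existed for some $Y$ in a class where the $NCFMT_d$ holds; by Proposition \ref{propgaussfree} the same identity holds for every $Z$ with the same fourth moment, so the $NCFMT_d$ would force $Q_{\mathbf Z}(h)$ to be \emph{exactly} semicircular for all such $Z$. Expanding the resulting identity $\kappa_6(Q_{\mathbf Z}(h))=0$ as a polynomial $P_t$ in $(\varphi(Z^3),\varphi(Z^5),\varphi(Z^6))$ confines the set of attainable moment triplets to a Lebesgue-null algebraic set, whereas the Hamburger moment criterion (positive definiteness of the Hankel matrix) shows this set contains a nonempty open subset of $\R^3$ --- a contradiction. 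This sixth-cumulant/moment-problem argument is what simultaneously yields the sign constancy of $\kappa_4(Q_\Y(f))$ (hence the monotonicity you obtain differently) and disposes of the ``is $Q_\Y(f)$ secretly semicircular?'' obstruction; note that it deliberately quantifies over \emph{all} $Z$ with a given fourth moment rather than attacking a single $Y$ head-on, which is why it succeeds where your pointwise attempt stalls. Your explicit $d=2$ computation ($a=\tfrac18$, $b=\tfrac14$ for the anticommutator kernel, giving $s_2\ge\tfrac32$) is correct and is a nice concrete supplement that the paper does not include, but it does not substitute for the missing step.
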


\begin{proof}

The proof is divided into three steps.

\underline{Step \; 1}. Let $t\in ]1,2]$ be a real number such that, for all $Y\in\mathcal{A}$ centered with unit variance, we have the following implication:
\begin{equation}\label{H}
\Big(\varphi(Y^4)=t\Big) ~~~\Rightarrow~~~~ \text{$Y$ satisfies the $NCFMT_d$}.
\end{equation}
We will show that, under this assumption, $\kappa_4(Q_{\Y}(f)) > 0$ for every $f$ or $\kappa_4(Q_{\Y}(f)) < 0$ for every $f$. By contradiction, assume that there exist two admissible kernels $f,g:[n]^d\to\R$ such that $\kappa_4(Q_{\Y}(f))<0$ and $\kappa_4(Q_{\Y}(g))>0$. By taking an appropriate linear combination of $f$ and $g$, we can construct an admissible kernel $h$ such that $\kappa_4(Q_{\Y}(h))=0.$
Using Proposition \ref{propgaussfree}, for any free random variable $Z$ centered with unit-variance such that $\ph(Z^4)=\ph(Y^4)=t$, we also have $\kappa_4(Q_{\Z}(h))=0.$ But $Z$ satisfies the $NCFMT_d$ by (\ref{H}), so the constant sequence $Q_{\Z}(h)$ is semicircular. Summarizing, under the assumption (\ref{H}), we have that $Q_{\Z}(h)$ is semicircular (implying that $\kappa_p(Q_{\Z}(h))=0$ for any $p\ge 3$) for all centered $Z$ with unit variance such that $\ph(Z^4)=t$.
On the other hand, let $E$ denote the set
$$
\Big\{(\ph(Z^3),\ph(Z^5),\ph(Z^6))\in\R^3\Big|Z\in \mathcal{A},\,\ph(Z)=0,\ph(Z^2)=1,\ph(Z^4)=t\Big\}.
$$
From above, by expanding $\kappa_6(Q_{\Z}(h))=0$ as a multivariate polynomial $P_t$ in $\ph(Z^3),\ph(Z^5),\ph(Z^6)$, it turns out that $E\subset \{(a,b,c)|P_t(a,b,c)=0\}$. In particular, $E$ has zero Lebesgue measure. On the other hand, since the criterion of solvability of the Hamburger moment problem is a necessary condition for the solvability of the Hausdorff moment problem, if  $(a,b,c)\in E$, then in particular the Hankel matrix $M_t=(\ph(Z^{i+j}))_{0\le i,j \le 3}$ is positive definite (see \cite[Theorem 6.1]{Chihara}).
However, the set of triplets $(a,b,c)$ such that $M_t$ is positive is a non-empty open subset of $\R^3$ and has then a positive Lebesgue measure. By contradiction, we deduce that the existence of the admissible kernel $h$ is impossible, meaning that either $\kappa_4(Q_Y(f))> 0$ for all admissible $f:[n]^d\to\R$, or $\kappa_4(Q_Y(f))< 0$ for all admissible $f:[n]^d\to\R$. By virtue of the previous remark, if $Y$ satisfies the $NCFMT_d$, \linebreak then $\kappa_4(Q_Y(f))> 0$ for every admissible kernel $f$.\\

\underline{Step\; 2}:
Let $Z$ be a centered random variable with unit variance such that $\ph(Z^4)>t$ (with $t$ such as (\ref{H})). We shall prove that $Z$ satisfies the $NCFMT_d$. So, assume that $\kappa_4(Q_{\Z}(f_n))\to 0$ as $n\to\infty$ for a given sequence of admissible kernels $f_n$. By applying the formula (\ref{formula}) to $Y$ and $Z$ and by taking the difference, we obtain:
\begin{equation}
\label{equazione}
d!^2\kappa_4(Q_{\Z}(f_n))=d!^2\kappa_4(Q_{\Y}(f_n))+(\ph(Z^4)-t)d!^2\sum_{k=1}^{n} \varphi\big(Q_\SSw(f_n(k,\cdot))^4\big),
\end{equation}
from which it follows that $\sum\limits_{k=1}^n\varphi\big(Q_\SSw(f_n(k,\cdot))^4\big) \to 0$.
Therefore, considering in the limit the formula (\ref{formula}) for $Z$, we infer that \linebreak
$\kappa_4(Q_{\SSw}(f_n)) \to 0$ as $n\to\infty$.
Theorem \ref{knps} combined with Theorem \ref{invnoncom} allows to conclude that $\sqrt{d!} Q_{\bf Z}(f_n) \stackrel{\text{Law}}{\to} \mathcal{S}(0,1)$ as $n\to\infty$. That is, $Z$ satisfies the $NCFMT_d$.\\

\underline{Step\; 3} (Conclusion). The desired threshold $s_d$ is thus given as being
the smallest real number $t\in ]1,2]$ such that there exists $Y$ satisfying {\bf (B)}, the $NCFMT_d$ and
 with $\varphi(Y^4)=t$.
\end{proof}

\end{document}